\theoremstyle{plain}
\newtheorem{thm}{Theorem}
\newtheorem{lem}[thm]{Lemma}
\numberwithin{equation}{section}
\begin{document}

\title[non-local curve flow]{a non-local area preserving curve flow }

\author{Liang Cheng}
\author[Li Ma]{Li Ma*}
\dedicatory{}
\date{}

\thanks{*=corresponding author. The research is partially supported by the National Natural Science
Foundation of China No.11271111 and SRFDP 20090002110019}
\keywords{non-local flow, area preserving, convex curves,
isoperimetric defect} \subjclass{35K15, 35K55, 53A04}

\address{Li Ma, Department of mathematics,
Henan Normal university Xinxiang, 453007, China}

\email{lma@math.tsinghua.edu.cn}

\address{Liang Cheng, School of Mathematics and Statistics\\
 Huazhong Normal University\\
Wuhan, 430079, P.R. CHINA} \email{math.chengliang@gmail.com}

\maketitle

\begin{abstract}
In this paper, we consider a kind of area preserving non-local flow
for convex curves in the plane. We show that the flow exists
globally, the length of evolving curve is non-increasing, and the
curve converges to a circle in $C^{\infty}$ sense as time goes into
infinity.
\end{abstract}

\section{Introduction}\label{sect1}
It is a interesting problem to study non-local flow for curves in
the plane. The purpose of this paper is to introduce a new non-local
flow which preserves the area enclosed by the evolving curve. Our
research is motivated by the famous works of Gage and Hamilton
\cite{GH} and \cite{CT} (see also \cite{CZ} for background and more
results). The curve shortening flow in a Riemannian manifold has
been studied extensively in the last few decades (see \cite{MC}).
The curve shortening flow in the plane is the family of evolving
curves $\gamma(t)$ such that
$$ \frac{\partial}{\partial
t}\gamma(t)=kN,$$ where $k$ and $N$ are the curvature of curve
$\gamma$ and the (inward pointing) unit normal vector to the curve.
For this flow, deep results are obtained in \cite{GH}, \cite{G2} and
\cite{GR}. They have proved that a simple closed initial curve
remains so along the flow, and the evolving curve becomes more and
more circular during the curve shortening process, and it converges
to a point in a finite time. Then another natural question arises
for expanding evolution flow for curves. B.Chow and D.H. Tsai have
studied the expanding flow such as $$\frac{\partial}{\partial
t}\gamma(t)=-G(\frac{1}{k})N, $$ where $G$ is a positive smooth
function with $G'>0$ everywhere. B.Andrews \cite{BA} has studied
more general expanding flows, especially flows with anisotropic
speeds. They have obtained deep results too. People then like to
study curve flow problems preserving some geometric quantities.
M.Gage \cite{G3} has considered an area-preserving flow $$
\frac{\partial}{\partial t}\gamma(t)=(k-\frac{2\pi}{L})N, $$ where
$L$ is the length of the curve $\gamma$,
 and have proved that the length of the curve is non-increasing and
 finally converges to a circle. Based on this, it is interesting study a non-local curve flow
 which preserves the length of the evolving curve. For
this, one may see \cite{MA} for a recent study. In a very recent
paper \cite{PY}, S.L.Pan and J.N.Yang consider a very interesting
length preserving curve flow for convex curves in the plane of the
form $$ \frac{\partial}{\partial
t}\gamma(t)=(\frac{L}{2\pi}-k^{-1})N,
$$
where $L$, $N$, and $k$ are the length, unit normal vector,and the
curvature  of the curve $\gamma(t)$ respectively.
 They have proved that the convex plane curve will become more and
more circular and converges to circle in the $C^{\infty}$ sense.

 We now consider the following non-local area preserving curve flow $$
\frac{\partial}{\partial t}\gamma(t)=(\alpha(t)-\frac{1}{k})N, $$
where $\alpha(t)=\frac{1}{L}\int^{L}_{0} \frac{1}{k}ds$ , and obtain
the following result.

\begin{thm}\label{main}
Suppose $\gamma(u,0)$ is a strictly convex curve (i.e. $k(0)>0$) in
the plane $\mathbb{R}^{2}.$ Assume $\gamma(t):=\gamma(u,t)$
satisfies the following evolving equation
\begin{eqnarray}\label{tf}
 \frac{\partial}{\partial t}\gamma(t)=(\alpha(t)-\frac{1}{k})N,
\end{eqnarray}
where $k$ is the curvature of the curve $\gamma(t)$ , $N$ is inward
pointing unit normal vector to the curve and
$\alpha(t):=\alpha(\gamma(t))=\frac{1}{L}\int^{L}_{0} \frac{1}{k}ds.
$
 Then the curve flow problem (\ref{tf}) has the global solution
 $\gamma(t)$, for all
 $t\in [0,\infty)$. Furthermore, the non-local curve flow(\ref{tf})
 preserves the area enclosed by the evolving curve
and keeps the strictly convexity under the evolution process. More
over, $\gamma(t)$ converges to a circle in the $C^{\infty}$ sense as
 time t goes into infinity.
\end{thm}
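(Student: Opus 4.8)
The plan is to follow the standard programme for convex curve flows (Gage--Hamilton \cite{GH}, Gage \cite{G3}, Pan--Yang \cite{PY}), exploiting the fact that for a strictly convex curve the outward normal angle $\theta$ is a good parameter. First I would pass to the support function $h(\theta,t)$ and the radius of curvature $\rho=1/k=h+h_{\theta\theta}$. Since a tangential velocity only reparametrizes the curve, the geometry is driven by the inward normal speed $F=\alpha(t)-\rho$, and because $F$ is \emph{affine} in $\rho$ a short computation collapses the system into the non-local but essentially linear parabolic equation
\[
 h_t=h_{\theta\theta}+h-\alpha(t),\qquad \rho_t=\rho_{\theta\theta}+\rho-\alpha(t),
\]
which is uniformly parabolic wherever $k>0$. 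Standard quasilinear parabolic theory then gives a unique smooth solution on a maximal interval $[0,T)$, establishing short-time existence.

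Next I would record the geometric quantities, which are immediate from the definition of $\alpha$. Using the first variation formulas for area and length under $\partial_t\gamma=FN$ with $N$ inward, together with $\int_0^L k\,ds=2\pi$, I get $\frac{dA}{dt}=-\int_0^L F\,ds=-\alpha L+\int_0^L\frac1k\,ds=0$, so the area is preserved, and $\frac{dL}{dt}=-\int_0^L kF\,ds=L-2\pi\alpha$. The Cauchy--Schwarz inequality $L^2=\bigl(\int_0^L 1\,ds\bigr)^2\le\int_0^L k\,ds\cdot\int_0^L\frac1k\,ds=2\pi\alpha L$ gives $L\le 2\pi\alpha$, hence $\frac{dL}{dt}\le0$ with equality only on circles; thus $L$ is non-increasing. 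Combined with the isoperimetric inequality $L^2\ge 4\pi A$ and the preserved area $A\equiv A_0$, this pins $L(t)$ into a compact interval $[2\sqrt{\pi A_0},\,L(0)]$ and shows $\alpha(t)$ is bounded above and below by positive constants. Strict convexity I would get from the maximum principle applied to the evolution of $k$: at a spatial minimum $\partial_t k_{\min}\ge k_{\min}(\alpha k_{\min}-1)\ge -k_{\min}$, so $k_{\min}(t)\ge k_{\min}(0)e^{-t}>0$ for all finite $t$.

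The hard part will be promoting short-time to \emph{global} existence, i.e.\ ruling out curvature blow-up by bounding $k$ from above (equivalently $\rho$ from below). The obstruction is structural: the reaction term $+\rho$ in the $\rho$-equation is destabilizing, so the maximum principle yields the lower bound on $k$ above but \emph{fails} to give an upper bound (at a minimum of $\rho$ one only gets $\partial_t\rho_{\min}\ge\rho_{\min}-\alpha$, which permits $\rho_{\min}$ to decay). I therefore expect to need Gage--Hamilton type integral estimates: using the preserved area, the two-sided bound on $L$ (hence on the isoperimetric ratio), and the boundedness of the non-local coefficient $\alpha(t)$, one controls $\int_0^{2\pi}\rho_\theta^2\,d\theta$ and the higher derivative energies, and then invokes a Sobolev/Gagliardo--Nirenberg argument to bound $\max k$ and all its derivatives on any finite interval. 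Standard parabolic bootstrapping and a continuation argument then force $T=\infty$. This curvature bound is the genuine analytic core of the theorem.

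Finally, for convergence I would return to the linear structure, which makes the asymptotics transparent. Since $\alpha(t)$ is spatially constant it only acts on the zeroth Fourier mode, and the closing-up condition forces the $n=\pm1$ modes of $\rho$ to vanish; hence each shape mode $\hat\rho_n$ with $|n|\ge2$ decays autonomously like $e^{(1-n^2)t}\le e^{-3t}$, giving exponential decay of $\rho-\tfrac{L}{2\pi}$ in every Sobolev, and so every $C^k$, norm. Meanwhile the mean $\tfrac{L}{2\pi}$ is monotone and bounded, converging to $\sqrt{A_0/\pi}$, the radius of the circle of area $A_0$; equivalently the isoperimetric defect $L^2-4\pi A_0\to0$. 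Thus $\rho(\cdot,t)$, and with it $k$, converges to a constant in $C^\infty$. Recovering the curve from $h$ and noting that the $n=\pm1$ modes of $h$ (the center) are preserved by the flow, I conclude that $\gamma(t)$ converges in $C^\infty$ to the circle of radius $\sqrt{A_0/\pi}$ enclosing area $A_0$, which completes the proof.
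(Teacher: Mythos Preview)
Your plan is broadly correct, but you misidentify where the difficulty lies and thereby take a harder road for global existence than the paper does. You already have $\rho_t=\rho_{\theta\theta}+\rho-\alpha(t)$ and, in your convergence paragraph, the observation that $\alpha$ hits only the zero mode while the closing condition kills the $n=\pm1$ modes of $\rho$. The paper exploits this same linearity \emph{for existence}, not just convergence: setting $v:=\rho-\tfrac{L}{2\pi}$ and using $L_t=L-2\pi\alpha$ gives $v_t=v_{\theta\theta}+v$, so $w:=ve^{-t}$ satisfies the pure heat equation $w_t=w_{\theta\theta}$ on $S^1$. This is solved by the heat kernel for all $t\ge0$ with $\sup|w|$ bounded by its initial value; hence on any finite interval $|\rho-\tfrac{L}{2\pi}|\le Me^{T^*}$, which together with $2\sqrt{\pi A_0}\le L\le L(0)$ keeps $\rho$ bounded and positive. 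The same substitution on $h-\tfrac{L}{2\pi}$ solves the support function globally, and the curve is then rebuilt from $h$ via an Andrews-type diffeomorphism argument. No Gage--Hamilton integral/Sobolev machinery is needed; the step you call ``the genuine analytic core'' collapses to one line. Your own Fourier decay $e^{(1-n^2)t}$ would have told you this had you invoked it earlier.

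For convergence the comparison runs the other way. The paper uses the Pan--Yang inequality $\int_0^L k^{-1}\,ds\ge(L^2-2\pi A)/\pi$ to obtain $\frac{d}{dt}(L^2-4\pi A)\le-2(L^2-4\pi A)$, then quotes Bonnesen and a Gage--Hamilton lemma to get $k\to 2\pi/L$. Your direct Fourier argument is more self-contained and gives $C^\infty$ convergence of $\rho$ to its mean without external isoperimetric input. Both routes are valid; yours buys independence from the cited inequalities, while the paper's buys a single unified device (the heat-equation reduction) that handles existence, convexity, and smoothness simultaneously.
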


Note that circles are stationary solutions to (\ref{tf}). We may
assume that $\gamma(u,0)$ is not a circle. Otherwise, the result is
obvious. The interesting part in the study the non-local flow
(\ref{tf}) lies in treating the possible collapsing point where
$k=\infty$ of the evolving curve at any finite time. To overcome
this, we use the maximum principle argument. The behavior of the
curve flow is by using the isoperimetric defect property for closed
convex curves. For the local existence of the flow, we can decompose
the curvature radius function into two parts, which give a linear
PDE and nonlinear ODE. We get the local existence of the curve flow
by solving the PDE first and then solving the ODE. As a comparison,
we shall present the support function trick (\cite{JP} \cite{P}),
which has also been used by B.Chow and B.Andrews in the Gauss
curvature flow and in the curve shortening flow. We can show that
the global existence of the support functions is equivalent to the
globally existence of the non-local flow (\ref{tf}). However, this
part is new in the research of the non-local flows.  The convexity
of the evolving flow is proved by the use of maximum principle to
the curvature evolution equation. We can show that the curvature of
the evolving curve is also uniformly bounded from below by a
positive constant. Hence, using the area of the region enclosed by
the evolving convex curve is uniformly bounded, we know that the
convex region is uniformly contained in a fixed ball. By using
Bonnesen inequality (in principle, we may also use John's ellipsoid
lemma) \cite{OS}\cite{M} we know that the curvature of evolving flow
is uniformly bounded and the evolving curve becomes more round, and
then we get the global flow. We shall give full proof of this fact
in section \ref{longtimeexistence}. To prove the convergence of the
global flow, we need the argument of Gage-Hamilton \cite{GH} (see
also the works \cite{G1} \cite{G2} \cite{GL} of Gage and gage and Yi
Li). It is not clear to us how to get global existence of the
area-preserving non-local flow for curves in non-flat surfaces.

The paper is organized as follows. In section \ref{preparation}, we
 calculate some evolution equations related to this curve flow. In
section \ref{longtimeexistence}, we prove a long time existence for
the curve flow (\ref{tf}) and show the strictly convexity of the
flow is preserved. The local existence of the curve flow is done by
using the supporting function method. In section \ref{convergence},
we show that isoperimetric deficit decays to zero under the
non-local curve flow (\ref{tf}) and the evolving curve converges to
a circle in $C^{\infty}$ sense.

\section{Preparation}\label{preparation}

In this section, we calculate some formulae for more general
non-local flows than the non-local flow (\ref{tf}). Consider the
evolving curve $\gamma(t)$ defined by the map $\gamma(u,t):S^1\times
I\to \mathbb{R}^2$ satisfying the equation:
\begin{eqnarray}\label{gf}
\frac{\partial}{\partial t}\gamma(t)=(\alpha(t)-\frac{1}{k})N,
\end{eqnarray}
where $\alpha(t)$ is a $C^{\infty}$ function only depends on the
time $t$. Since $u$ and $t$ are independent variables,
 $\frac{\partial}{\partial u}$ and $\frac{\partial}{\partial
t}$ commute when applied to functions on $\mathbb{R}^2$. Let $s$
denote the arc-length of the curve $\gamma$. Then the operator
$\frac{\partial}{\partial s}$ is given in terms of $u$ by
$$
\frac{\partial}{\partial s}=\frac{1}{v}\frac{\partial}{\partial u},
$$
where $v=|\frac{\partial \gamma}{\partial u}|$.

The arc-length parameter is $ds=vdu$. Let $T$ and $N$ be the unit
tangent vector and the (inward pointing) unit normal vectors to the
curve respectively. Then the Frenet equations can be written as
$$
\frac{\partial T}{\partial u}=vkN,\ \ \frac{\partial N}{\partial
u}=-vkT.
$$

We now introduce some formulas according to (\ref{gf}). First we
have the following evolution equation for $v$.
\begin{lem} Along the flow, it holds that
 $\frac{\partial v}{\partial t}=(1-k\alpha)v$.
\end{lem}
\begin{proof}
\begin{eqnarray*}
\frac{\partial}{\partial t}(v^2)&=&\frac{\partial}{\partial
t}<\frac{\partial \gamma}{\partial u},\frac{\partial
\gamma}{\partial u}> =2<\frac{\partial \gamma}{\partial
u},\frac{\partial^2 \gamma}{\partial t\partial u}>
=2<\frac{\partial \gamma}{\partial u},\frac{\partial^2 \gamma}{\partial u\partial t}>\\
&=&2<vT,\frac{\partial}{\partial u}((\alpha-\frac{1}{k})N)>=2(1-
k\alpha)v^2.
\end{eqnarray*}

Then the lemma follows immediately.
\end{proof}

We also have the following useful relation for the operators
$\frac{\partial}{\partial t}$ and $\frac{\partial}{\partial t}$.
\begin{lem}\label{TS}Along the flow, it holds that
$$
\frac{\partial}{\partial t} \frac{\partial}{\partial
s}-\frac{\partial}{\partial s} \frac{\partial}{\partial
t}=(k\alpha-1)\frac{\partial}{\partial s}.
$$
\end{lem}
\begin{proof}
\begin{eqnarray*}
\frac{\partial}{\partial t}\frac{\partial}{\partial
s}&=&\frac{\partial}{\partial t}(\frac{1}{v}\frac{\partial}{\partial
u}) =-\frac{v_t}{v^2}\frac{\partial}
{\partial u}+\frac{1}{v}\frac{\partial}{\partial t}\frac{\partial}{\partial u}\\
&=&-\frac{v_t}{v}(\frac{1}{v}\frac{\partial} {\partial
u})+\frac{1}{v}\frac{\partial}{\partial u}\frac{\partial}{\partial
t} =(k\alpha-1)\frac{\partial}{\partial s}+\frac{\partial}{\partial
s}\frac{\partial }{\partial t}.
\end{eqnarray*}
\end{proof}

The derivatives of $T$ and $N$ are given by the following result.
\begin{lem}\label{TN} Along the flow, it holds that
$$
\frac{\partial}{\partial t}T=\frac{k_s}{k^2}N ,\ \ \textrm{and}\ \
\frac{\partial}{\partial t}N=-\frac{k_s}{k^2}T.
$$
\end{lem}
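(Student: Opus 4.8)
The plan is to start from the identity $T=\partial\gamma/\partial s$ and differentiate in $t$, converting the $t$--$s$ interchange into a spatial derivative by means of the commutator relation already established in Lemma \ref{TS}. Concretely, I would write
\[
\frac{\partial T}{\partial t}=\frac{\partial}{\partial t}\frac{\partial\gamma}{\partial s}
=(k\alpha-1)\frac{\partial\gamma}{\partial s}+\frac{\partial}{\partial s}\frac{\partial\gamma}{\partial t}
=(k\alpha-1)T+\frac{\partial}{\partial s}\Bigl[\bigl(\alpha-\tfrac{1}{k}\bigr)N\Bigr],
\]
where the second equality is exactly Lemma \ref{TS} applied to $\gamma$, and the last step uses the flow equation (\ref{gf}).

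The next step is to expand the spatial derivative on the right. Rewriting the Frenet equations in arc-length form gives $\partial T/\partial s=kN$ and $\partial N/\partial s=-kT$, and since $\alpha=\alpha(t)$ does not depend on $s$ we have $\partial_s(\alpha-1/k)=k_s/k^2$. Hence
\[
\frac{\partial}{\partial s}\Bigl[\bigl(\alpha-\tfrac{1}{k}\bigr)N\Bigr]
=\frac{k_s}{k^2}N+\bigl(\alpha-\tfrac{1}{k}\bigr)(-kT)
=\frac{k_s}{k^2}N-(k\alpha-1)T.
\]
Substituting this back, the two $(k\alpha-1)T$ terms cancel and leave $\partial T/\partial t=(k_s/k^2)N$, which is the first claimed formula.

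For the second formula I would avoid a direct computation and instead differentiate the orthonormality relations of the moving frame. From $\langle N,N\rangle=1$ one gets $\langle\partial_tN,N\rangle=0$, so $\partial_tN$ is parallel to $T$; from $\langle T,N\rangle=0$ one gets $\langle\partial_tT,N\rangle+\langle T,\partial_tN\rangle=0$, and since $\langle\partial_tT,N\rangle=k_s/k^2$ by the first part, this forces $\langle T,\partial_tN\rangle=-k_s/k^2$, i.e. $\partial_tN=-(k_s/k^2)T$. I do not expect any genuine obstacle here; the only point requiring care is the sign of $\partial_s(1/k)=-k_s/k^2$ and the observation that the cancellation of the tangential $(k\alpha-1)T$ contributions is what makes the flow purely normal in its effect on $T$ — the nonlocal term $\alpha$ plays no role precisely because it is $s$-independent.
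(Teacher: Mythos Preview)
Your proposal is correct and follows essentially the same approach as the paper: both compute $\partial_t T$ by applying the commutator from Lemma~\ref{TS} to $T=\partial_s\gamma$, expanding $\partial_s[(\alpha-1/k)N]$ via the Frenet equations, and observing the tangential cancellation; both then obtain $\partial_t N$ by differentiating the orthonormality relations $\langle T,N\rangle=0$ and $\langle N,N\rangle=1$. Your write-up is slightly more explicit about the sign of $\partial_s(1/k)$ and the role of $\alpha$ being $s$-independent, but the argument is the same.
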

\begin{proof}
\begin{eqnarray*}
\frac{\partial}{\partial t}T&=&\frac{\partial}{\partial
t}\frac{\partial}{\partial s}\gamma=\frac{\partial}{\partial
s}\frac{\partial}{\partial
t}\gamma+(k\alpha-1)\frac{\partial}{\partial
s}\gamma\\
&=&\frac{\partial}{\partial s}((\alpha-\frac{1}{k})N)+(k\alpha-1)T\\
&=&(\frac{\partial}{\partial s}(\alpha-\frac{1}{k}))N+(\alpha-\frac{1}{k})\frac{\partial}{\partial s}N+(k\alpha-1)T\\
&=&\frac{k_s}{k^2}N.
\end{eqnarray*}
The second equation follows from
\begin{eqnarray*}
0=\frac{\partial}{\partial
t}<T,N>=<\frac{k_s}{k^2}N,N>+<T,\frac{\partial}{\partial t}N>,
\end{eqnarray*}
and $\frac{\partial N}{\partial t}$ must be perpendicular to N.
\end{proof}

We denote the angle between the tangent and the X-axis by $\theta$.
For convex curves we can use the angle $\theta$ of the tangent line
as a parameter. We may write the curvature
$k=k(\theta)=\frac{d\theta}{ds}$ in terms of this parameter. Then we
have

\begin{lem} Along the flow, it holds that
$$
\frac{\partial\theta}{\partial t}=\frac{k_s}{k^2}.
$$
\end{lem}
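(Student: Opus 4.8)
The plan is to read off $\frac{\partial\theta}{\partial t}$ directly from the time derivative of the unit tangent $T$, which has already been computed in Lemma \ref{TN}. Since $\theta$ is by definition the angle that $T$ makes with the $X$-axis, I can write $T=(\cos\theta,\sin\theta)$. For the strictly convex curve with the orientation fixed by the Frenet equations above, the inward unit normal is the counterclockwise $\pi/2$-rotation of $T$, so $N=(-\sin\theta,\cos\theta)$. (This is consistent with $\partial T/\partial s=kN$ and $k>0$, since differentiating $T=(\cos\theta,\sin\theta)$ in arc length gives $\partial T/\partial s=\theta_s\,(-\sin\theta,\cos\theta)=\theta_s N$, i.e. $k=\theta_s$.)

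With this in hand I would simply differentiate $T=(\cos\theta,\sin\theta)$ with respect to $t$ and recognize the normal direction:
$$
\frac{\partial T}{\partial t}=\frac{\partial\theta}{\partial t}\,(-\sin\theta,\cos\theta)=\frac{\partial\theta}{\partial t}\,N.
$$
On the other hand, Lemma \ref{TN} gives $\frac{\partial T}{\partial t}=\frac{k_s}{k^2}N$. Equating the two expressions and using that $N$ is a unit vector yields
$$
\frac{\partial\theta}{\partial t}=\frac{k_s}{k^2}
$$
at once. One could equivalently differentiate $N=(-\sin\theta,\cos\theta)$ and compare with the second formula of Lemma \ref{TN}, obtaining the same identity; the $T$-computation is the more direct route.

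The only step requiring any care is the orientation bookkeeping: I must be sure that the inward normal $N$ is the $+\pi/2$ rotation of $T$ rather than the $-\pi/2$ rotation, since the opposite convention would flip the sign and produce $\frac{\partial\theta}{\partial t}=-\frac{k_s}{k^2}$. This is not a genuine obstacle but merely a consistency check against the sign conventions already committed to in the Frenet equations ($\partial T/\partial u=vkN$ with $k>0$) and in Lemma \ref{TN}; once those are respected the computation is entirely routine.
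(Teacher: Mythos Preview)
Your argument is correct and is essentially the same as the paper's own proof: both write $T=(\cos\theta,\sin\theta)$, differentiate in $t$, and compare with the formula $\partial T/\partial t=(k_s/k^2)N$ from Lemma~\ref{TN} to read off $\partial\theta/\partial t$. Your added remark on the orientation convention is a welcome clarification but does not change the argument.
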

\begin{proof}

Since $T=(cos \theta,sin \theta)$, we use the formula in lemma
\ref{TN} to calculate
\begin{eqnarray*}
\frac{\partial T}{\partial t}=\frac{k_s}{k^2}N=\frac{k_s}{k^2}(-\sin
\theta,\cos \theta).
\end{eqnarray*}
Comparing components on both sides we get the conclusion of this
lemma.
\end{proof}

The curvature for the evolving curve evolves according to
\begin{lem}
\begin{eqnarray}\label{curveq1}
\frac{\partial k}{\partial t}= \frac{1}{k^2}\frac{\partial^2
k}{\partial s^2}-\frac{2}{k^3}(\frac{\partial k}{\partial
s})^2+(k\alpha-1)k.
\end{eqnarray}
\end{lem}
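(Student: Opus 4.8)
The plan is to reduce everything to the tangent angle $\theta$, whose time derivative has already been recorded. Since $T=(\cos\theta,\sin\theta)$, the Frenet relation $\frac{\partial T}{\partial s}=kN$ combined with $\frac{\partial T}{\partial s}=\theta_s(-\sin\theta,\cos\theta)$ shows that the inward normal is $N=(-\sin\theta,\cos\theta)$ and, crucially, that the curvature is simply $k=\frac{\partial\theta}{\partial s}$. This identity is the bridge between the kinematic quantity $\theta$ and the geometric quantity $k$: it lets me transport the evolution $\frac{\partial\theta}{\partial t}=\frac{k_s}{k^2}$ established just above into an evolution equation for $k$ itself.

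Next I would differentiate $k=\theta_s$ in time and commute the two derivatives using Lemma~\ref{TS}. Writing $\partial_s=\frac{\partial}{\partial s}$ and $\partial_t=\frac{\partial}{\partial t}$, the commutator relation gives
\[
\frac{\partial k}{\partial t}=\partial_t\partial_s\theta=\partial_s\partial_t\theta+(k\alpha-1)\partial_s\theta=\frac{\partial}{\partial s}\left(\frac{k_s}{k^2}\right)+(k\alpha-1)k,
\]
where I have substituted $\partial_t\theta=k_s/k^2$ and $\partial_s\theta=k$. It then only remains to expand the arc-length derivative of $k_s/k^2$, the single genuine computation. By the product rule,
\[
\frac{\partial}{\partial s}\left(\frac{k_s}{k^2}\right)=\frac{k_{ss}}{k^2}-\frac{2k_s^2}{k^3},
\]
and inserting this into the previous display produces exactly the stated equation \eqref{curveq1}.

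There is no analytic obstacle here; the result is an algebraic consequence of the three preceding lemmas. The one thing to watch is the consistency of orientation: the identity $k=\theta_s$ and the sign of the commutator coefficient $(k\alpha-1)$ must match the conventions fixed in the Frenet equations and in Lemma~\ref{TS}, since an orientation slip would corrupt the reaction term $(k\alpha-1)k$. As an independent check, one could instead differentiate the Frenet relation $\frac{\partial T}{\partial s}=kN$ directly in time, again commuting via Lemma~\ref{TS} and using $\partial_t T=\frac{k_s}{k^2}N$ from Lemma~\ref{TN} together with $N_s=-kT$; comparing the $N$-components recovers the same identity while the $T$-components cancel automatically, confirming the computation.
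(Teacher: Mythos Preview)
Your argument is correct and is essentially the same as the paper's: you differentiate $k=\theta_s$ in time, invoke the commutator from Lemma~\ref{TS}, substitute $\theta_t=k_s/k^2$, and expand $\partial_s(k_s/k^2)$. The additional justification of $k=\theta_s$ via Frenet and the orientation check are nice touches but do not alter the route.
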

\begin{proof}
By lemma \ref{TS}, we have
\begin{eqnarray*}
\frac{\partial k}{\partial t}=\frac{\partial }{\partial
t}\frac{\partial \theta}{\partial s} &=&\frac{\partial}{\partial
s}\frac{\partial \theta}{\partial
t}+(k\alpha-1)\frac{\partial \theta}{\partial s}\\
&=&\frac{\partial }{\partial
s}(\frac{k_s}{k^2})+(k\alpha-1)k\\
&=&\frac{1}{k^2}\frac{\partial^2}{\partial
s^2}k-\frac{2}{k^3}(\frac{\partial}{\partial s}k)^2+(k\alpha-1)k.
\end{eqnarray*}
This completes the proof.
\end{proof}

Denote the area enclosed by the evolving curve by $A(t)$. Then we
have
\begin{lem}\label{area}
A(t) satisfies the equation
$$
\frac{d}{dt}A(t)=\int^L_0\frac{1}{k}ds-\alpha L.
$$
Hence, $A(t)$ remains constant provided
$\alpha(t)=\frac{1}{L}\int^L_0\frac{1}{k}ds$.
\end{lem}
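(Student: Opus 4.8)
The plan is to compute the rate of change of the enclosed area from a geometric representation of $A(t)$ and then reduce everything using the evolution equations already established for $v$, $N$ and the Frenet relations. Since $N$ is the inward unit normal and the curve is positively oriented, the enclosed area can be written as
\begin{equation*}
A(t)=-\frac{1}{2}\oint \langle \gamma,N\rangle\, ds=-\frac{1}{2}\int_{S^1}\langle \gamma,N\rangle\, v\, du,
\end{equation*}
which one checks directly against a circle to fix the sign. Differentiating in $t$ under the integral (the $u$-domain $S^1$ is fixed) and recalling $\gamma_t=(\alpha-\tfrac1k)N$, the evolution equation $v_t=(1-k\alpha)v$, and Lemma \ref{TN} giving $N_t=-\tfrac{k_s}{k^2}T$, I would obtain, after rewriting $v\,du=ds$,
\begin{equation*}
\frac{dA}{dt}=-\frac{1}{2}\int_0^L\Big[\big(\alpha-\tfrac1k\big)-\tfrac{k_s}{k^2}\langle\gamma,T\rangle+(1-k\alpha)\langle\gamma,N\rangle\Big]\,ds.
\end{equation*}

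The heart of the argument is to show that the two position-dependent terms collapse to reproduce the first term. Setting $p=\langle\gamma,T\rangle$ and $q=\langle\gamma,N\rangle$, the Frenet equations $T_s=kN$, $N_s=-kT$ give $p_s=1+kq$ and $q_s=-kp$. Integrating $p_s$ over the closed curve yields the key identity $\int_0^L kq\,ds=-L$. Then, using $-\tfrac{k_s}{k^2}=\tfrac{d}{ds}(\tfrac1k)$ and integrating by parts (the boundary terms vanish on a closed curve),
\begin{equation*}
\int_0^L -\tfrac{k_s}{k^2}\,p\,ds=-\int_0^L \tfrac1k\,p_s\,ds=-\int_0^L \tfrac1k\,ds-\int_0^L q\,ds,
\end{equation*}
and adding $\int_0^L(1-k\alpha)q\,ds=\int_0^L q\,ds-\alpha\int_0^L kq\,ds=\int_0^L q\,ds+\alpha L$ makes the $\int q\,ds$ terms cancel, leaving exactly $\alpha L-\int_0^L\tfrac1k\,ds=\int_0^L(\alpha-\tfrac1k)\,ds$. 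Feeding this back gives $\frac{dA}{dt}=-\tfrac12\big[\int_0^L(\alpha-\tfrac1k)ds\big]\cdot 2=\int_0^L\tfrac1k\,ds-\alpha L$, as claimed.

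I expect the main obstacle to be bookkeeping rather than depth: getting the orientation/sign convention for the inward normal right in the area formula, correctly accounting for the time-dependence of the measure $ds=v\,du$ through $v_t=(1-k\alpha)v$, and carrying out the integration by parts so that the $\langle\gamma,T\rangle$ and $\langle\gamma,N\rangle$ terms cancel. A quicker alternative, which I would mention as a consistency check, is the first variation of area formula: for a flow with normal speed $f$ in the inward direction one has $\frac{dA}{dt}=-\int_0^L f\,ds$, and with $f=\alpha-\tfrac1k$ this immediately gives $\frac{dA}{dt}=-\int_0^L(\alpha-\tfrac1k)\,ds=\int_0^L\tfrac1k\,ds-\alpha L$. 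Finally, substituting the specified value $\alpha(t)=\tfrac1L\int_0^L\tfrac1k\,ds$ gives $\frac{dA}{dt}=\int_0^L\tfrac1k\,ds-\tfrac1L\big(\int_0^L\tfrac1k\,ds\big)L=0$, so $A(t)$ is constant along the flow.
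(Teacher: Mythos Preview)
Your proof is correct and follows essentially the same route as the paper: both start from $-2A=\int_0^L\langle\gamma,N\rangle\,ds$, differentiate using the evolution equations for $\gamma$, $v$, and $N$, and then integrate the $\langle\gamma,T\rangle$ term by parts so that it combines with the $(1-k\alpha)\langle\gamma,N\rangle$ term to reproduce a second copy of $\int_0^L(\alpha-\tfrac1k)\,ds$. Your use of the shorthand $p,q$ and the identity $\int_0^L kq\,ds=-L$ is a tidy repackaging of the same cancellation the paper carries out in the $u$-variable.
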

\begin{proof}
Since
$$
-2A(t)=\int^L_0 <\gamma,N>ds=\int^{2\pi}_0<\gamma,vN>du,
$$
we have
\begin{eqnarray*}
-2\frac{d}{dt}A(t)&=&\int^{2\pi}_0<\gamma_t,vN>+<\gamma,v_t N>+<\gamma,vN_t>du\\
&=&\int^{2\pi}_0 <(\alpha-\frac{1}{k})N,vN>du+\int^{2\pi}_0
<\gamma,(1-k\alpha)vN>du\\
& &+\int^{2\pi}_0
<\gamma,(-\frac{k_s}{k^2})vT>du\\
&=&\int^{2\pi}_0 (\alpha-\frac{1}{k})vdu+\int^{2\pi}_0
<\gamma,(1-k\alpha)vN>du\\
& &+\int^{2\pi}_0
\frac{\partial}{\partial u}(\frac{1}{k}-\alpha)<\gamma,T>du\\
&=&\int^{L}_0 (\alpha-\frac{1}{k})ds+\int^{2\pi}_0
<\gamma,(1-k\alpha)vN>du\\
& &+\int^{2\pi}_0
\frac{\partial}{\partial u}(\frac{1}{k}-\alpha)<\gamma,T>du\\
\end{eqnarray*}
By the use of integration by parts, we have
\begin{eqnarray*}
-2\frac{d}{dt}A(t)&=&\int^{L}_0 (\alpha-\frac{1}{k})ds+\int^{2\pi}_0
<\gamma,(1-k\alpha)vN>du\\
& &+\int^{2\pi}_0 (\alpha-\frac{1}{k})(<\gamma_u,T>+<\gamma,T_u>)du\\
&=&\int^{L}_0 (\alpha-\frac{1}{k})ds+\int^{2\pi}_0
<\gamma,(1-k\alpha)vN>du\\
& &+\int^{2\pi}_0 (\alpha-\frac{1}{k})(<vT,T>+<\gamma,vkN>)du\\
&=&2\int^{L}_0(\alpha-\frac{1}{k})ds=-2(\int^L_0\frac{1}{k}ds-\alpha
L).
\end{eqnarray*}

\end{proof}

A useful lower bound for $\alpha(t)$ in the flow (\ref{tf}) is
below.
\begin{lem}\label{swartz}
If $\alpha(t)=\frac{1}{L}\int^{L}_{0} \frac{1}{k}ds$, we have
$$
\alpha\geq \frac{L}{2\pi}.
$$
The equality holds if and only if the curve $\gamma$ has the
constant curvature.
\end{lem}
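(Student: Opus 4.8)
The plan is to exploit the strict convexity of $\gamma$ to change the variable of integration from arc-length $s$ to the tangent angle $\theta$, after which the asserted inequality is recognized as a direct application of the Cauchy--Schwarz inequality (the lemma label itself hints at this).

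First I would use that $\gamma(t)$ is strictly convex, so $k>0$ everywhere and the tangent angle $\theta$ is a strictly increasing function of $s$ which sweeps out $[0,2\pi]$ exactly once as one traverses the closed curve. Since $k=\partial\theta/\partial s$, this gives $ds=\frac{1}{k}\,d\theta$, and I can rewrite both quantities entering $\alpha$ as integrals in $\theta$:
$$
L=\int_0^L ds=\int_0^{2\pi}\frac{1}{k}\,d\theta,\qquad
\int_0^L\frac{1}{k}\,ds=\int_0^{2\pi}\frac{1}{k^2}\,d\theta .
$$
With these substitutions, the claim $\alpha\ge L/(2\pi)$, i.e. $\frac{1}{L}\int_0^{2\pi}\frac{1}{k^2}\,d\theta\ge\frac{L}{2\pi}$, becomes after clearing the factor $1/L$ the statement
$$
2\pi\int_0^{2\pi}\frac{1}{k^2}\,d\theta\ge\Big(\int_0^{2\pi}\frac{1}{k}\,d\theta\Big)^2 .
$$
This is precisely Cauchy--Schwarz applied to the pair of functions $1/k$ and $1$ on $[0,2\pi]$, namely $\big(\int_0^{2\pi}\frac{1}{k}\cdot 1\,d\theta\big)^2\le\int_0^{2\pi}\frac{1}{k^2}\,d\theta\cdot\int_0^{2\pi}1\,d\theta$. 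So the single substantive step is the reparametrization; once it is in place the inequality is immediate.

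For the equality case I would invoke the equality condition in Cauchy--Schwarz: equality forces $1/k$ to be proportional to the constant function $1$, hence (by continuity of $k$) $k$ is constant along $\gamma$, which is exactly the assertion. I do not expect a serious obstacle here; the only point needing care is checking that the change of variables is legitimate, and this is guaranteed by strict convexity, since $k>0$ makes $\theta(s)$ a genuine monotone reparametrization covering $[0,2\pi]$ once. The main thing to get right is simply the correct powers of $k$ appearing after the substitution in each of the two integrals.
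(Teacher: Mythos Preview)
Your argument is correct and is essentially the same as the paper's, both resting on a single application of Cauchy--Schwarz; the only difference is that the paper works directly in arc-length, using $\int_0^L k\,ds=2\pi$ and applying Cauchy--Schwarz to $\sqrt{k}$ and $1/\sqrt{k}$ to get $2\pi\int_0^L\frac{1}{k}\,ds\ge L^2$, whereas you first reparametrize by $\theta$ and then apply Cauchy--Schwarz to $1/k$ and $1$. Under the substitution $d\theta=k\,ds$ your inequality and the paper's are literally identical, so your change of variables is a harmless detour rather than a different idea.
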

\begin{proof}
Since
$$
\int^L_0 kds=2\pi,
$$
using the Cauchy-Schwartz inequality we have
$$
\int^L_0 kds \cdot \int^L_0 \frac{1}{k}ds \geq (\int^L_0ds)^2=L^2.
$$
Then we have the result.
\end{proof}

\begin{lem}\label{length}
The length of the evolving curve evolves by
$$
\frac{d}{dt}L=L-2\pi\alpha(t),
$$
Moreover, $\frac{d}{dt}L\leq 0$  provided
$\alpha(t)=\frac{1}{L}\int^L_0\frac{1}{k}ds$.
\end{lem}
\begin{proof}
\begin{eqnarray*}
\frac{d}{d t}L=\int^{2\pi}_0v_tdu=
\int^{2\pi}_0(1-k\alpha)vdu=\int^{L}_0(1-k\alpha)ds=L-2\pi\alpha.
\end{eqnarray*}

We have $\frac{d}{dt}L\leq 0$ if $\alpha(t)=\frac{1}{L}\int^{L}_{0}
\frac{1}{k}ds$ by lemma \ref{swartz}.
\end{proof}

So much for the general flow (\ref{gf}).

\section{Local and long time existence}\label{longtimeexistence}

We first consider a priori estimates of the curve flow. Since the
changing of the tangential components of the velocity vector of
$\gamma_t$ affects only the parametrization, not the geometric
shapes of the evolving curve, we can choose a suitable tangent
component $\eta$ to simplify the analysis of the non-local flow
(\ref{tf}). This trick has been used by many authors, see, for
example, \cite{GH} or \cite{PY}. So we consider the following
evolution problem, which is equivalent to (\ref{tf}):
\begin{eqnarray}\label{change}
\gamma_t=(\alpha(t)-\frac{1}{k})N+\eta T.
\end{eqnarray}
Similar to the calculations in section \ref{preparation}, we have
\begin{lem}\label{change1} Along the flow (\ref{change}), it is true that
\begin{eqnarray*}
&&\frac{\partial v}{\partial t}=\frac{\partial \eta}{\partial u}+(1-k\alpha)v,\\
 &&\frac{\partial}{\partial t}T=(\eta k+\frac{k_s}{k^2})N ,\ \
\frac{\partial}{\partial t}N=-(\eta k+\frac{k_s}{k^2})T, \\
&&\frac{\partial \theta}{\partial t}=\eta k+\frac{k_s}{k^2},\\
&&\frac{\partial k}{\partial t}= \frac{1}{k^2}\frac{\partial^2
k}{\partial s^2}-\frac{2}{k^3}(\frac{\partial k}{\partial
s})^2+(k\alpha-1)k+\eta \frac{\partial k}{\partial s}\\
&&\frac{d}{dt}A(t)=\int^L_0\frac{1}{k}ds-\alpha L,\\
&&\frac{d}{dt}L=L-2\pi\alpha.\\
\end{eqnarray*}
\end{lem}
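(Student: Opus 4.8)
The plan is to repeat, essentially verbatim, the chain of computations from Section~\ref{preparation}, now carrying along the extra tangential velocity $\eta T$ in (\ref{change}), and then to observe at the end that the two genuinely geometric quantities $A$ and $L$ are insensitive to this tangential term. The whole lemma reduces to tracking how the term $\eta T$ propagates through the same four derivations (for $v$, the commutator, $T$ and $N$, and $k$).

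First I would recompute $v_t$. Starting from $\frac{\partial}{\partial t}(v^2)=2\langle\gamma_u,\gamma_{tu}\rangle$ and expanding $\gamma_{tu}=\frac{\partial}{\partial u}[(\alpha-\frac1k)N+\eta T]$ with the Frenet relations $T_u=vkN$ and $N_u=-vkT$, the tangential part of $\gamma_{tu}$ is $\eta_u+(1-k\alpha)v$, which after pairing with $vT$ gives $v_t=\frac{\partial\eta}{\partial u}+(1-k\alpha)v$. This is the only identity whose shape genuinely changes; everything downstream follows from the correspondingly modified commutator.

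Second, and this is the step on which all the rest hinges, I would redo Lemma~\ref{TS}. Since $\frac{\partial}{\partial t}\frac{\partial}{\partial s}=-\frac{v_t}{v}\frac{\partial}{\partial s}+\frac{\partial}{\partial s}\frac{\partial}{\partial t}$ and now $\frac{v_t}{v}=\eta_s+(1-k\alpha)$ (using $\eta_u/v=\eta_s$), the commutator becomes
$$
\frac{\partial}{\partial t}\frac{\partial}{\partial s}-\frac{\partial}{\partial s}\frac{\partial}{\partial t}=(k\alpha-1-\eta_s)\frac{\partial}{\partial s}.
$$
Applying this to $\gamma$ gives $\frac{\partial}{\partial t}T=\frac{\partial}{\partial s}\frac{\partial}{\partial t}\gamma+(k\alpha-1-\eta_s)T$; expanding $\frac{\partial}{\partial s}[(\alpha-\frac1k)N+\eta T]$ with $T_s=kN$ and $N_s=-kT$ produces a normal part $\frac{k_s}{k^2}+\eta k$ together with a tangential part $1-k\alpha+\eta_s$, and the added commutator term $(k\alpha-1-\eta_s)T$ exactly cancels this tangential part. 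Hence $\frac{\partial}{\partial t}T=(\eta k+\frac{k_s}{k^2})N$, and $\frac{\partial}{\partial t}N=-(\eta k+\frac{k_s}{k^2})T$ follows from differentiating $\langle T,N\rangle=0$ and $|N|^2=1$ exactly as in Lemma~\ref{TN}. Writing $T=(\cos\theta,\sin\theta)$ and comparing components gives $\theta_t=\eta k+\frac{k_s}{k^2}$, and then $k=\theta_s$ together with the commutator yields the stated equation for $k$, in which the new transport term $\eta k_s$ comes from $\frac{\partial}{\partial s}(\eta k)$ while the accompanying $\eta_s k$ cancels against the $-\eta_s k$ supplied by the commutator.

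Finally, for $A$ and $L$ I would reuse Lemmas~\ref{area} and~\ref{length}, noting that the tangential contribution integrates away: in $\frac{d}{dt}L=\int_0^{2\pi}v_t\,du$ the piece $\int_0^{2\pi}\eta_u\,du$ vanishes because $\gamma(\cdot,t)$ is closed, leaving $\frac{d}{dt}L=\int_0^L(1-k\alpha)\,ds=L-2\pi\alpha$; the analogous cancellation of all $\eta$-terms in the area computation leaves $\frac{d}{dt}A=\int_0^L\frac1k\,ds-\alpha L$ unchanged, as expected since a tangential speed only reparametrizes the curve. The one real bookkeeping hazard is getting the sign and the $\eta_s$ term in the commutator right: once that is correct, the cancellation of the tangential component of $\frac{\partial}{\partial t}T$, the precise appearance of $\eta k_s$ in the curvature equation, and the geometric invariance of $A$ and $L$ are all automatic.
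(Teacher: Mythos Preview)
Your proposal is correct and follows exactly the approach the paper indicates: the paper itself gives no proof beyond ``Similar to the calculations in section~\ref{preparation}, we have\ldots'', and your plan is precisely to rerun those computations with the extra tangential term $\eta T$, tracking how it enters $v_t$, the commutator, $T_t$, $N_t$, $\theta_t$, $k_t$, and then observing that the $\eta$-contributions integrate away in $A_t$ and $L_t$. Your bookkeeping (in particular the modified commutator $\partial_t\partial_s-\partial_s\partial_t=(k\alpha-1-\eta_s)\partial_s$ and the cancellation of $\eta_s k$ in the curvature equation) is right, and the cancellation of the $\eta$-terms in the area formula via one integration by parts of $\int\eta_u\langle\gamma,N\rangle\,du$ against the new $-\eta vk\langle\gamma,T\rangle$ piece from $N_t$ is exactly what makes the $A$ equation unchanged.
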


Note that $L$ and $A$ are both independent of $\eta$. In order to
make $\theta$ independent of time $t$, we can choose suitable $\eta$
such that $\frac{\partial \theta}{\partial t}=0$, i.e.
$$\eta=-\frac{1}{k^3}k_s=-\frac{1}{k^2}\frac{\partial k}{\partial
\theta}.$$ Then by changing the space variable we can transform away
the tangential component, without changing the shape of the curves
(see also the proof of Theorem 4.1.4 in \cite{GH}). Hence we can get
from the flow (\ref{change}) the flow (\ref{tf}).

We now consider the following equivalent problem instead from now
on:
\begin{eqnarray}\label{change3}
\gamma_t=(\alpha(t)-\frac{1}{k})N-\frac{1}{k^3}\frac{\partial
k}{\partial s}T.
\end{eqnarray}

Then by lemma \ref{change1}, we have the following result.
\begin{lem}\label{change2} Along the flow (\ref{change3}), it holds that
\begin{eqnarray}
 &&\frac{\partial}{\partial t}T=0 ,\ \
\frac{\partial}{\partial t}N=0, \ \ \frac{\partial \theta}{\partial
t}=0,\\
 &&\frac{\partial k}{\partial t}=
\frac{1}{k^2}\frac{\partial^2 k}{\partial
s^2}-\frac{3}{k^3}(\frac{\partial k}{\partial s})^2+(k\alpha-1)k,\label{curveq5}\\
&&\frac{d}{dt}A(t)=\int^L_0\frac{1}{k}ds-\alpha L,\\
&&\frac{d}{dt}L=L-2\pi\alpha.
\end{eqnarray}
\end{lem}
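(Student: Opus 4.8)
The plan is to obtain Lemma \ref{change2} as a direct specialization of Lemma \ref{change1} to the particular tangential speed $\eta=-\frac{1}{k^3}k_s$ that defines the reparametrized flow (\ref{change3}). The guiding observation is that this $\eta$ is engineered precisely so that the common factor $\eta k+\frac{k_s}{k^2}$ appearing in the evolution of $T$, $N$, and $\theta$ vanishes identically: indeed $\eta k+\frac{k_s}{k^2}=-\frac{1}{k^3}k_s\cdot k+\frac{k_s}{k^2}=-\frac{k_s}{k^2}+\frac{k_s}{k^2}=0$. First I would substitute this identity into the $T$, $N$, and $\theta$ formulas of Lemma \ref{change1}, which immediately yields $\frac{\partial T}{\partial t}=0$, $\frac{\partial N}{\partial t}=0$, and $\frac{\partial\theta}{\partial t}=0$. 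Geometrically this confirms that the chosen reparametrization freezes the Frenet frame and the turning angle, so that $\theta$ becomes a time-independent coordinate, which is exactly the property that makes $\theta$ usable as a fixed spatial variable in the subsequent analysis.

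Next I would plug $\eta=-\frac{1}{k^3}k_s$ into the curvature evolution equation of Lemma \ref{change1}. The only $\eta$-dependent term there is $\eta\frac{\partial k}{\partial s}$, and with this choice it equals $-\frac{1}{k^3}k_s\cdot k_s=-\frac{1}{k^3}\big(\frac{\partial k}{\partial s}\big)^2$. Adding this to the existing $-\frac{2}{k^3}\big(\frac{\partial k}{\partial s}\big)^2$ term merges the two into a single $-\frac{3}{k^3}\big(\frac{\partial k}{\partial s}\big)^2$, which produces (\ref{curveq5}). Finally, since the equations for $A(t)$ and $L$ in Lemma \ref{change1} do not involve $\eta$ at all (as already noted, $L$ and $A$ are invariant under reparametrization), they carry over verbatim.

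There is no genuine analytic obstacle in this lemma: the entire content reduces to the algebraic cancellation $\eta k+\frac{k_s}{k^2}=0$ together with the elementary combination of the two quadratic-gradient terms in the curvature equation. The substantive work has already been absorbed into Lemma \ref{change1}; the role of Lemma \ref{change2} is simply to record the clean form of the evolution once the normalizing tangential field has been inserted, so that the later maximum-principle and convergence arguments can be run with $\theta$ as a static parameter.
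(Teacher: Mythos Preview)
Your proposal is correct and matches the paper's approach exactly: the paper itself offers no separate proof for Lemma~\ref{change2}, merely stating that it follows from Lemma~\ref{change1} after inserting $\eta=-\frac{1}{k^3}k_s$. Your explicit verification of the cancellation $\eta k+\frac{k_s}{k^2}=0$ and the merging of the gradient-squared terms is precisely the intended substitution, spelled out in more detail than the paper bothers to give.
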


By theorem \ref{convex}, we can use the angle variable $\theta$ of
the tangent line as a parameter for convex curves. To determine the
evolution equation for curvature of the evolving curve when using
$\theta$ as a parameter, we take $\tau=t$
 as the time parameter. That is, we change variables from
 $(u,t)$ to $(\theta,\tau)$. We obtain the following equation
 for $k$ in terms of $\theta$ and $\tau$.

\begin{lem}
\begin{eqnarray}\label{curveq2}
\frac{\partial k}{\partial \tau}=\frac{\partial ^2 k}{\partial
\theta^2} -\frac{2}{k}(\frac{\partial k}{\partial
\theta})^2+(k\alpha-1)k.
\end{eqnarray}
\end{lem}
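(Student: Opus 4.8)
The plan is to convert the curvature evolution equation (\ref{curveq5}) of Lemma~\ref{change2}, which is written in the arc-length–time coordinates $(s,t)$, into the angle–time coordinates $(\theta,\tau)$ with $\tau=t$. Two ingredients drive the computation. First, since $k=\partial\theta/\partial s$ for a strictly convex curve, the arc-length and angle derivatives are related by
\begin{equation*}
\frac{\partial}{\partial s}=k\,\frac{\partial}{\partial \theta}.
\end{equation*}
Second, and this is the conceptual heart of the argument, the modified flow (\ref{change3}) was engineered precisely so that $\partial\theta/\partial t=0$ by Lemma~\ref{change2}. Consequently, differentiating at a fixed material point $u$ coincides with differentiating at a fixed angle $\theta$, which with $\tau=t$ yields the identity $\partial_\tau k\big|_\theta=\partial_t k\big|_u$. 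Getting this correspondence straight is the only real subtlety; everything else is a mechanical substitution.

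With these two facts in hand, I would first compute the spatial derivatives. Applying $\partial/\partial s=k\,\partial/\partial\theta$ gives
\begin{equation*}
\frac{\partial k}{\partial s}=k\,k_\theta,\qquad
\frac{\partial^2 k}{\partial s^2}=k\,\frac{\partial}{\partial\theta}\bigl(k\,k_\theta\bigr)=k^2 k_{\theta\theta}+k\,k_\theta^2 .
\end{equation*}
I would then substitute these into (\ref{curveq5}). The diffusion term contributes $\frac{1}{k^2}\bigl(k^2 k_{\theta\theta}+k\,k_\theta^2\bigr)=k_{\theta\theta}+\frac{1}{k}k_\theta^2$, while the gradient term contributes $-\frac{3}{k^3}(k\,k_\theta)^2=-\frac{3}{k}k_\theta^2$; the zeroth-order term $(k\alpha-1)k$ is untouched by the change of variables.

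Collecting the coefficients of $k_\theta^2$ produces $\tfrac{1}{k}-\tfrac{3}{k}=-\tfrac{2}{k}$, so that
\begin{equation*}
\frac{\partial k}{\partial t}=\frac{\partial^2 k}{\partial\theta^2}-\frac{2}{k}\Bigl(\frac{\partial k}{\partial\theta}\Bigr)^2+(k\alpha-1)k .
\end{equation*}
Finally, invoking $\partial_\tau k=\partial_t k$ from the correspondence above rewrites the left-hand side as $\partial_\tau k$, which is exactly (\ref{curveq2}). I expect the computation to be entirely routine once the $(\theta,\tau)$-versus-$(u,t)$ bookkeeping is settled; the one place to be careful is ensuring that $\theta$, rather than $u$, is held fixed in the time derivative, which is legitimate only because the tangential term in (\ref{change3}) was chosen to freeze $\theta$ along the flow.
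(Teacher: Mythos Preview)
Your proof is correct and follows essentially the same route as the paper: you convert $\partial/\partial s$ to $k\,\partial/\partial\theta$, substitute into (\ref{curveq5}), and use $\partial\theta/\partial t=0$ from Lemma~\ref{change2} to identify $\partial_t k$ with $\partial_\tau k$. The paper's own argument is identical in content, merely phrasing the time-derivative identification via the chain rule $\partial_t k=\partial_\tau k+\partial_\theta k\cdot\partial_t\theta=\partial_\tau k$.
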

\begin{proof}
By the chain rule and lemma \ref{change2}, we have
$$
\frac{\partial k}{\partial t}=\frac{\partial k}{\partial \tau}+
\frac{\partial k}{\partial \theta}\frac{\partial \theta}{\partial t}
=\frac{\partial k}{\partial \tau},
$$
and
$$
\frac{\partial^2 k}{\partial s^2}=(\frac{\partial \theta}{\partial
s}\frac{\partial }{\partial \theta}) (\frac{\partial
\theta}{\partial s}\frac{\partial }{\partial \theta})
=k^2\frac{\partial^2 k}{\partial \theta^2}+k(\frac{\partial
k}{\partial \theta})^2.
$$
Substituting these expressions into the formula (\ref{curveq5}) in
lemma \ref{change2} we get the result.
\end{proof}

Note that $L=\int_{S^1}\frac{1}{k}d\theta$. By direct calculation,
we can derive a heat equation for $1/k$ (see (\ref{curveq3})) from
formula (\ref{curveq2}).

\begin{lem}\label{longtime} We have
\begin{eqnarray}\label{curveq4}
\frac{\partial }{\partial \tau}(\frac{1}{k})=\frac{\partial ^2
}{\partial \theta^2}(\frac{1}{k}) +\frac{1}{k}-\alpha.
\end{eqnarray}
Let $h=\frac{1}{k}-\frac{L}{2\pi}$ and let $w=he^{-\tau}$. Then we
have
$$
h_{\tau}=h_{\theta\theta}+h
$$
and
\begin{eqnarray}\label{curveq3}
w_{\tau}=w_{\theta\theta}.
\end{eqnarray}
Then $w$ can be solved for time interval $[0,+\infty)$ as
$$
w(\theta,\tau)=\int^{\infty}_{-\infty}\frac{1}{2\sqrt{\pi
\tau}}e^{-\frac{(\theta-\xi)^2}{4\tau}}w(\theta,0)d\xi
$$
and the solution to the flow (\ref{tf}) is smooth.
\end{lem}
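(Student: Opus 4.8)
The plan is to split the statement into three tasks: derive the divergence-type identity (\ref{curveq4}) from the curvature equation (\ref{curveq2}) by a change of dependent variable; absorb the nonlocal coefficient $\alpha$ into a length-corrected unknown so that the evolution becomes an autonomous linear heat equation; and then read off global existence and smoothness from the explicit heat kernel. For the first task I would set $u=1/k$ and simply differentiate. From $u_\tau=-k^{-2}k_\tau$ together with $u_{\theta\theta}=2k^{-3}k_\theta^2-k^{-2}k_{\theta\theta}$, substituting $k_\tau$ out of (\ref{curveq2}) gives $u_\tau=-k^{-2}k_{\theta\theta}+2k^{-3}k_\theta^2-\alpha+k^{-1}$. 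The first two terms are exactly $u_{\theta\theta}$, so $u_\tau=u_{\theta\theta}+u-\alpha$, which is (\ref{curveq4}); this is pure algebra and should cost nothing.

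The decisive step is the second one. I would put $v=u-\frac{L}{2\pi}$ and exploit that $L=L(\tau)$ carries no $\theta$-dependence, so $v_{\theta\theta}=u_{\theta\theta}$ while $v_\tau=u_\tau-\frac{1}{2\pi}\frac{dL}{d\tau}$. Feeding in (\ref{curveq4}) and the length evolution $\frac{dL}{d\tau}=L-2\pi\alpha$ from Lemma \ref{length}, the two occurrences of $\alpha$ cancel and I am left with the autonomous linear equation $v_\tau=v_{\theta\theta}+v$. This exact cancellation of the nonlocal term is the structural miracle that drives the whole argument. Setting $w=ve^{-\tau}$ then kills the zeroth-order term, since $w_\tau=(v_\tau-v)e^{-\tau}=v_{\theta\theta}e^{-\tau}=w_{\theta\theta}$, which is (\ref{curveq3}).

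For the third task, I would note that $\theta$ is the tangent angle of a closed convex curve, so $w(\cdot,\tau)$ is $2\pi$-periodic, and solve (\ref{curveq3}) by convolving the periodically extended initial datum $w(\cdot,0)$ against the Euclidean heat kernel (the displayed formula, where the integrand should read $w(\xi,0)$). Translation invariance of the kernel preserves periodicity, and the solution exists for every $\tau\in[0,\infty)$; the smoothing property of the heat semigroup makes $w$ real-analytic in $\theta$ and smooth in $\tau$. Unwinding the substitutions, $v=we^{\tau}$ and $u=1/k=v+\frac{L}{2\pi}$ are then $C^\infty$, whence $k$ and the flow (\ref{tf}) are $C^\infty$.

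I expect the real obstacle to lie not in any of these calculations but in the tacit claim that this linear picture really delivers global existence of the \emph{geometric} flow. Recovering $u=1/k$ needs the length $L(\tau)$, which is coupled back to $w$ through $\alpha=\frac{1}{L}\int_0^{2\pi}u^2\,d\theta$, so one must verify that $L$ stays positive and, crucially, that $1/k$ remains strictly positive for all time so that the evolving curve stays convex and no collapsing point $k=\infty$ appears. Here the argument must lean on the separately established convexity preservation (maximum principle applied to (\ref{curveq5})) and on the closure condition for a simple closed curve, which forces the first Fourier harmonic of $1/k$ to vanish and thereby guarantees that $w$ decays and $L$ stays bounded below. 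I would make these dependencies explicit rather than regard the heat-kernel solvability of (\ref{curveq3}) as the end of the proof.
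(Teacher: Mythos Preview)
Your argument is correct and follows exactly the paper's route: derive (\ref{curveq4}) by differentiating $u=1/k$ and substituting (\ref{curveq2}), cancel $\alpha$ via $v=u-L/(2\pi)$ and Lemma~\ref{length} to get $v_\tau=v_{\theta\theta}+v$, then set $w=ve^{-\tau}$ and solve by the heat kernel. Your final paragraph rightly flags that the passage from solvability of (\ref{curveq3}) to global existence of the geometric flow requires the separate convexity and support-function arguments of Theorems~\ref{convex}--\ref{support1}; the paper's proof of the lemma itself omits this discussion and simply records the heat-kernel formula (with the same typo $w(\theta,0)$ for $w(\xi,0)$ that you noticed).
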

\begin{proof}
Since
$$
\frac{\partial }{\partial \tau}(\frac{1}{k})=-\frac{k_{\tau}}{k^2}=
-\frac{k_{\theta\theta}}{k^2}+\frac{2}{k^3}k^2_{\theta}-\frac{k^2\alpha-k}{k^2},
$$
and
$$
\frac{\partial ^2 }{\partial \theta^2}(\frac{1}{k})=-\frac{\partial
}{\partial \theta}(\frac{k_{\theta}}{k^2})
=-\frac{k_{\theta\theta}}{k^2}+\frac{2}{k^3}k^2_{\theta},
$$
(\ref{curveq4}) follows immediately.

By lemma \ref{length}, we have
\begin{eqnarray*}
h_{\tau}&=&\frac{\partial }{\partial
\tau}(\frac{1}{k})-\frac{L_{\tau}}{2\pi}=
\frac{\partial ^2 }{\partial \theta^2}(\frac{1}{k})+\frac{1}{k}-\alpha-\frac{L-2\pi \alpha}{2\pi}\\
&=&\frac{\partial ^2 }{\partial \theta^2}(\frac{1}{k})+(\frac{1}{k}-\frac{L}{2\pi})\\
&=&h_{\theta\theta}+h.
\end{eqnarray*}
Then (\ref{curveq3}) follows immediately.
\end{proof}
By lemma \ref{longtime} we know that the function $h$ is globally
well-defined from the initial data $h(0)$ of the curve $\gamma(0)$.
Note that $\int_{S^1}hd\theta=0$. Using
$\frac{1}{k}=h+\frac{L}{2\pi}$ and $\alpha=\frac{1}{L}\int_0^L
\frac{1}{k}ds$, we know that
$$
\alpha=\frac{1}{L}\int_0^{2\pi}
\frac{1}{k^2}d\theta=\frac{1}{L}\int_{S^1}
h^2d\theta+\frac{L}{2\pi}.
$$
Then from the ODE
$$
\frac{d}{d\tau}L=L-2\pi\alpha=-\frac{1}{L}\int_{S^1} h^2d\theta,
$$
we can solve $L$ from the initial curve $\gamma(0)$ and then we get
$\frac{1}{k}$ in local time interval. Then, we can get the local
existence of the curve flow via the formula $$ x(\theta, t)=
\int_0^\theta \frac{\cos \phi}{k} d\phi, \ \  y(\theta, t)=
\int_0^\theta \frac{\sin \phi}{k} d\phi
$$
to define evolving curves $\gamma(t)$ (as in \cite{GH}) for the flow
equation (\ref{change3}). As a comparison, we shall try to consider
the local existence by using the supporting function method since it
is often used in the literatures about curve evolution flows.

\begin{thm}\label{convex}
Under the assumptions of theorem \ref{main},
 the curve flow keeps the convexity property.
\end{thm}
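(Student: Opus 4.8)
The plan is to show that strict convexity---that is, positivity of the curvature $k$---is preserved by applying the parabolic maximum principle to the curvature evolution equation. I would work on the maximal time interval $[0,T)$ on which the flow exists and the curve remains strictly convex; this interval is nonempty by short-time existence since $k(0)>0$. The goal is then to produce a uniform positive lower bound for $k$ on every finite subinterval, which shows that $k$ cannot degenerate to zero, so that convexity is never lost before $T$ (and, combined with the global existence established via the support function, persists for all time).

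The crucial preliminary fact is the sign of the reaction term, which is governed by $\alpha$. Since $\alpha=\frac1L\int_0^L\frac1k\,ds$ is the average of the positive quantity $1/k$, Lemma \ref{swartz} gives $\alpha\ge L/2\pi>0$. On the convex interval the angle $\theta$ is an admissible coordinate, so I may use the evolution equation \eqref{curveq2}. Examining the spatial minimum of $k$ at a fixed time, one has $k_\theta=0$ and $k_{\theta\theta}\ge 0$ there, so the first-order term vanishes and the diffusion term has a favorable sign, leaving the differential inequality
$$
\frac{d}{d\tau}k_{\min}\ge (k_{\min}\alpha-1)k_{\min}=\alpha k_{\min}^2-k_{\min}\ge -k_{\min},
$$
where the last step uses $\alpha k_{\min}^2\ge 0$. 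Integrating this ODE comparison yields the explicit bound
$$
k_{\min}(\tau)\ge k_{\min}(0)\,e^{-\tau}>0,
$$
so the curvature stays bounded away from zero on every finite interval and strict convexity is preserved. (An equivalent and even cleaner route is to use the \emph{linear} equation \eqref{curveq4} of Lemma \ref{longtime} for $\rho:=1/k$, namely $\rho_\tau=\rho_{\theta\theta}+\rho-\alpha$; at a spatial maximum of $\rho$ one has $\rho_{\theta\theta}\le 0$, giving $\frac{d}{d\tau}\rho_{\max}\le \rho_{\max}-\alpha\le\rho_{\max}$, hence $\rho_{\max}(\tau)\le\rho_{\max}(0)e^{\tau}$, which is the same conclusion in inverse form.)

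The main technical obstacle is not the formal computation but its rigorous justification. The function $\tau\mapsto k_{\min}(\tau)$ is in general only Lipschitz, not differentiable, so the meaning of ``the derivative at the minimum'' must be made precise using the lower Dini derivative together with Hamilton's trick for the evolution of extremal values of solutions of parabolic equations; the pointwise inequality above then holds in the sense of forward difference quotients, which is enough to integrate. A secondary point to be careful about is the mild circularity in using the $\theta$-parametrization: equations \eqref{curveq2} and \eqref{curveq4} presuppose convexity, so the entire argument is carried out on the maximal convex interval $[0,T)$, and it is precisely the uniform bound $k_{\min}(\tau)\ge k_{\min}(0)e^{-\tau}$ that rules out any finite-time breakdown of convexity and lets the argument continue up to $T$.
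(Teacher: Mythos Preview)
Your argument is correct, but the paper takes a different route. Rather than applying the maximum principle to the curvature equation, the paper exploits the full strength of Lemma~\ref{longtime}: after setting $v=\tfrac{1}{k}-\tfrac{L}{2\pi}$ and $w=ve^{-\tau}$, one obtains the pure heat equation $w_\tau=w_{\theta\theta}$, and the Poisson integral formula yields a \emph{uniform} bound $|w|\le M$ for all $(\theta,\tau)$. This gives $\bigl|\tfrac{1}{k}-\tfrac{L}{2\pi}\bigr|\le Me^{\tau}$ on any finite interval $[0,T^*)$; since $L$ is trapped between $\sqrt{4\pi A}$ and $L(0)$, one concludes $k\ne 0$ and hence $k>0$ by continuity from $k(\cdot,0)>0$. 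Your maximum-principle argument is more elementary and self-contained---it needs neither the substitution $w=ve^{-\tau}$ nor the heat-kernel representation, and it sidesteps the appeal to the isoperimetric bound on $L$. The paper's approach, on the other hand, has the side benefit that the explicit heat-kernel solution simultaneously delivers $C^\infty$ smoothness of the flow, which is used later in the convergence argument. Your alternative via $\rho=1/k$ and equation~\eqref{curveq4} is essentially the maximum-principle shadow of the paper's heat-equation reduction, and both routes produce the same qualitative exponential lower bound $k_{\min}(\tau)\gtrsim e^{-\tau}$.
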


\begin{proof}By lemma \ref{longtime}, there exists a constant $M>0$ such that for
$(\theta,\tau)\in [0,2\pi]\times (0,\infty)$,
$$
|w(\theta,\tau)|\leq M.
$$

Then for any finite $T^{*}$, $\tau\in [0,T^*)$,
$$
|\frac{1}{k}-\frac{L}{2\pi}|\leq M e^{T^*}.
$$

By lemma \ref{length}, L is bounded above. Also by the isoperimetric
inequality, L has a lower bound $\sqrt{4\pi A}$. So we get
$$k(\theta,\tau)\neq 0, \ \text{for} \ (\theta,\tau)\in [0,2\pi]\times [0,T^*).$$
Now, from the continuity of $k(\theta,\tau)$ and the positivity of
$k(\theta,0)$, we know that
$$
k(\theta,\tau)> 0, \ \text{for} \ (\theta,\tau)\in [0,2\pi]\times
[0,T^*).
$$

Then the theorem follows from the arbitrariness of $T^*$.
\end{proof}

One can also see that $\frac{1}{k}$ is uniformly bounded at any
finite existing time interval $[0,T)$. This then implies that
$\alpha(\tau)\leq C(T)$ for some constant $C(T)>0$. In fact, by the
maximum principle we know that $\inf_{S^1}w(\theta,\tau)$ is
non-decreasing and $\sup_{S^1}w(\theta,\tau)$ is non-increasing.
This implies that $$\frac{1}{k}-\frac{L}{2\pi}\leq
[\frac{1}{k(0)}-\frac{L(0)}{2\pi}]e^{\tau}.$$ Then
$$
\frac{1}{k}\leq\frac{L}{2\pi}+\sup_{S^1}[\frac{1}{k(0)}-\frac{L(0)}{2\pi}]e^{\tau},
$$
which gives a lower bound of $k$ in any time interval. Here we have
used the fact that $\int_{\gamma}k(0)ds=2\pi$, which gives
$L(0)\inf_{S^1} k(0)<2\pi$ (and $\sup_{S^1}k(0)^{-1}=(\inf
k(0))^{-1}>\frac{L(0)}{2\pi}$) unless $\gamma(0)$ is the circle.
Similarly, we have
$$
\frac{1}{k}\geq\frac{L}{2\pi}+\inf_{S^1}[\frac{1}{k(0)}-\frac{L(0)}{2\pi}]e^{\tau}.
$$
More importantly, these two estimates imply that there is no blow-up
of the evolving curve in any finite time interval.

By this, we have proved the existence of global flow of (\ref{gf}).
\begin{thm}\label{no+blowup+1} Assume the local existence of the curve flow (\ref{gf}).  We have a global flow to the curve flow
(\ref{gf}), that is, there is no finite time blow up point of the
curvature function $k$.
\end{thm}

Now we present the supporting function method to prove the local
existence of the curve flow (\ref{gf})(see theorem I1.2 in
\cite{BA}). We denote $S$ the support function of the curve
$\gamma$, i.e. $S=-<\gamma,N>$. So, $L=\int_{S^1} Sd\theta$ and
\begin{eqnarray}\label{suppcurva}
\frac{1}{k}=\frac{\partial^2 S}{\partial \theta^2}+S.
\end{eqnarray}
Since $\alpha=\frac{1}{L}\int_0^L\frac{1}{k}ds$, we then have
$$
\alpha=\frac{1}{L}\int_{S^1}(\partial^2_{\theta} S+S)^2d\theta.
$$

We have the following evolution equation of support function.

\begin{lem}
\begin{eqnarray}\label{support2}
\frac{\partial S}{\partial \tau}=\frac{\partial^2 S}{\partial
\theta^2}+S-\alpha
\end{eqnarray}
\end{lem}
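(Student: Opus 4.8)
The plan is to differentiate the defining relation $S=-\langle\gamma,N\rangle$ directly in time, working in the $(\theta,\tau)$ frame introduced for the flow (\ref{change3}). The crucial preliminary observation is that in this frame the time derivative at fixed $\theta$ agrees with the time derivative at fixed $u$: since $\tau=t$ and, by Lemma \ref{change2}, $\partial\theta/\partial t=0$, the chain rule gives $\partial_t=\partial_\tau+(\partial\theta/\partial t)\,\partial_\theta=\partial_\tau$. Consequently, again by Lemma \ref{change2}, the inward normal satisfies $\partial N/\partial\tau=\partial N/\partial t=0$, so $N$ is independent of $\tau$ in this parametrization. This is exactly the feature that makes the support function the convenient variable to track.

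With this in hand, I would compute
\begin{eqnarray*}
\frac{\partial S}{\partial\tau}=-\Big\langle\frac{\partial\gamma}{\partial\tau},N\Big\rangle-\Big\langle\gamma,\frac{\partial N}{\partial\tau}\Big\rangle=-\Big\langle\frac{\partial\gamma}{\partial\tau},N\Big\rangle,
\end{eqnarray*}
the second term dropping out by the observation above. Now insert the reparametrized flow equation (\ref{change3}), $\gamma_\tau=(\alpha-\tfrac{1}{k})N-\tfrac{1}{k^3}k_s\,T$, and use $\langle N,N\rangle=1$ together with $\langle T,N\rangle=0$. The tangential term contributes nothing and the position $\gamma$ never reappears, so $\langle\gamma_\tau,N\rangle=\alpha-\tfrac{1}{k}$ and hence $\partial S/\partial\tau=\tfrac{1}{k}-\alpha$. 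Substituting the support-function identity (\ref{suppcurva}), $\tfrac{1}{k}=S_{\theta\theta}+S$, yields the claimed equation (\ref{support2}).

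I expect the only real subtlety to be the bookkeeping of the change of variables, i.e.\ justifying $\partial_\tau=\partial_t$ and $\partial_\tau N=0$ cleanly; once these are fixed, the computation is a one-line projection onto $N$. It is worth noting why I avoid the tempting alternative of applying the operator $\mathcal{L}:=\partial_\theta^2+1$ to the proposed identity and invoking (\ref{curveq4}): since $\mathcal{L}$ annihilates $\cos\theta$ and $\sin\theta$, that route only determines $\partial_\tau S-(S_{\theta\theta}+S-\alpha)$ up to an element $a(\tau)\cos\theta+b(\tau)\sin\theta$ of $\ker\mathcal{L}$, reflecting the translation freedom in the choice of origin, and would require an extra argument to kill that term. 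The direct projection onto $N$ sidesteps this ambiguity entirely, because the resulting right-hand side turns out to be independent of the position $\gamma$.
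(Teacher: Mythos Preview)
Your proposal is correct and follows exactly the paper's own argument: differentiate $S=-\langle\gamma,N\rangle$ in $\tau$, use Lemma~\ref{change2} to drop the $\langle\gamma,N_\tau\rangle$ term, project the flow (\ref{change3}) onto $N$ to obtain $\tfrac{1}{k}-\alpha$, and then invoke (\ref{suppcurva}). Your additional remarks on the $\partial_\tau=\partial_t$ bookkeeping and on why the $\mathcal{L}$-route is inferior are accurate but go beyond what the paper records.
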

\begin{proof}
By lemma \ref{change2}, we have
\begin{eqnarray*}
\frac{\partial S}{\partial \tau}&=&-\frac{\partial }{\partial
\tau}<\gamma,N>=-<\frac{\partial }{\partial \tau}\gamma,N>\\
&=&-<(\alpha(\tau)-\frac{1}{k})N-\frac{1}{k^2}\frac{\partial
k}{\partial \theta}T,N>\\
&=&\frac{1}{k}-\alpha\\
&=&\frac{\partial^2 S}{\partial \theta^2}+S-\alpha.
\end{eqnarray*}
\end{proof}

Then we have
$$
[(S-\frac{L}{2\pi})e^{-\tau}]_\tau=[(S-\frac{L}{2\pi})e^{-\tau}]_{\theta\theta}.
$$

Similar to lemma \ref{longtime}, we have
\begin{thm}\label{support}
 The support function $S$ can be solved for time interval
$[0,+\infty)$ as
$$
(S(\theta,\tau)-\frac{L(\tau)}{2\pi})e^{-\tau}=\int^{\infty}_{-\infty}\frac{1}{2\sqrt{\pi
\tau}}e^{-\frac{(\theta-\xi)^2}{4\tau}}(S(\theta,0)-\frac{L(0)}{2\pi})d\xi.
$$
Furthermore,
$$
(\frac{1}{k}-S(\theta,\tau))e^{-\tau}=\int^{\infty}_{-\infty}\frac{1}{2\sqrt{\pi
\tau}}e^{-\frac{(\theta-\xi)^2}{4\tau}}(\frac{1}{k}(0)-S(\theta,0))d\xi.
$$
\end{thm}

From all these, we can easily get the following.
\begin{thm}\label{no+blowup}
For $\tau>0$, we have
$$
(\frac{1}{k}-S)(\theta,\tau)\geq e^\tau\inf_\theta
(\frac{1}{k}(0)-S(\theta,0)).
$$
\end{thm}
With these understanding, we can use the general existence result of
Jiang- Pan \cite{JP} (or the method used in \cite{MA}) to show that
there is a local solution to the flow (\ref{support2}).
 It is
convenient to choose the normal vector for parameter of the curve.
We denote $\textbf{n}:\gamma \to S^1$ be the Gauss map. Let $z$ be
the normal vector and $\gamma$ parametrized by $z$. So we have
$S(z,t)=-<\gamma(n^{-1}(z)),z>$. Let $r[S](z)$ be the radius of
curvature at the point with normal $z$ is given by
$r[S](z)=\frac{\partial^2 S}{\partial \theta^2}(z)+S(z)$. Then we
obtain the following result.

\begin{thm}\label{support1}
Assume $S:S^1\times [0,\infty)\to \mathbb{R}$ is a smooth function
of equation (\ref{support2}) with radius $r[S]>0$, then there exists
a solution $\gamma:\zeta\times [0,\infty)\to \mathbb{R}$ satisfies
the equation (\ref{tf}) which has the initial data
$\gamma_0=\{-S_0(z)z-\frac{\partial S_0}{\partial \theta}(z)
\frac{\partial z}{\partial \theta}:z\in S^1 \}$ and such that the
curve $\gamma(t)$ has the support function $S(t)$ for each
$t\in[0,\infty)$.
\end{thm}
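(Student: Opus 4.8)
The plan is to invert the support function: given the smooth solution $S(z,\tau)$ of (\ref{support2}) with $r[S]>0$, I reconstruct the curve by the classical Minkowski formula
\[
\gamma(z,\tau)=-S(z,\tau)\,z-\frac{\partial S}{\partial\theta}(z,\tau)\,\frac{\partial z}{\partial\theta},
\]
where $z=(\cos\theta,\sin\theta)$ ranges over the unit normals, so that $\frac{\partial^2 z}{\partial\theta^2}=-z$, $\langle z,\frac{\partial z}{\partial\theta}\rangle=0$ and $|z|=|\frac{\partial z}{\partial\theta}|=1$. At $\tau=0$ this is exactly the announced initial curve $\gamma_0$, so the initial data is built in. I then verify, in turn, that $\gamma$ is a strictly convex immersion, that its support function is $S$, and that it evolves by (\ref{tf}).

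First I would differentiate the reconstruction in $\theta$. Using $\frac{\partial^2 z}{\partial\theta^2}=-z$ the pure $z$-terms cancel and one finds $\frac{\partial\gamma}{\partial\theta}=-\bigl(S+\frac{\partial^2 S}{\partial\theta^2}\bigr)\frac{\partial z}{\partial\theta}=-r[S]\,\frac{\partial z}{\partial\theta}$. Since $r[S]>0$ and $|\frac{\partial z}{\partial\theta}|=1$, the curve is a regular immersion with unit tangent $-\frac{\partial z}{\partial\theta}$ (which has tangent angle $\theta$) and unit normal $z$; the arc length element is $ds=r[S]\,d\theta$, so the curvature is $k=1/r[S]>0$ and the curve is strictly convex, matching the hypothesis $r[S]>0$. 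Because $\langle z,\frac{\partial z}{\partial\theta}\rangle=0$ and $|z|=1$, the reconstruction formula also gives $\langle\gamma,z\rangle=-S$, hence $-\langle\gamma,z\rangle=S$ and $z$ is genuinely the Gauss map of $\gamma$; thus $S$ really is the support function of the reconstructed curve.

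The heart of the argument is to differentiate $\gamma$ in $\tau$ and match it against the flow. Since $z$ and $\frac{\partial z}{\partial\theta}$ do not depend on $\tau$, and substituting (\ref{support2}) in the form $S_\tau=\frac{\partial^2 S}{\partial\theta^2}+S-\alpha=r[S]-\alpha=\frac1k-\alpha$, I obtain
\[
\frac{\partial\gamma}{\partial\tau}=-S_\tau\,z-\frac{\partial S_\tau}{\partial\theta}\,\frac{\partial z}{\partial\theta}=\Bigl(\alpha-\frac1k\Bigr)z-\frac{\partial}{\partial\theta}\Bigl(\frac1k\Bigr)\frac{\partial z}{\partial\theta},
\]
where the last step uses that $\alpha$ depends only on $\tau$. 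The first term is exactly the normal velocity $(\alpha-\frac1k)N$ of (\ref{tf}), while using $\frac{\partial}{\partial\theta}(\frac1k)=-\frac{1}{k^3}\frac{\partial k}{\partial s}$ together with $\frac{\partial z}{\partial\theta}=-T$ shows the second term equals the tangential correction $-\frac{1}{k^3}\frac{\partial k}{\partial s}T$ of the equivalent flow (\ref{change3}). Since the reconstruction uses the time-independent parameter $\theta$, consistent with $\frac{\partial\theta}{\partial t}=0$ in lemma \ref{change2}, the reconstructed family solves (\ref{change3}) exactly.

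Finally I would pass from (\ref{change3}) back to (\ref{tf}). As the tangential component only reparametrizes and does not move the geometric curve, I solve on the domain circle the first-order ODE that absorbs the tangential speed $\eta=-\frac{1}{k^3}k_s$; composing $\gamma$ with this time-dependent reparametrization produces a family whose velocity is the purely normal field $(\alpha-\frac1k)N$, i.e. a genuine solution of (\ref{tf}) with support function $S(t)$ for every $t\in[0,\infty)$. I expect the main obstacle to be the careful bookkeeping of this tangential term: one must confirm the reparametrization ODE is globally solvable on $[0,\infty)$ — which follows from $r[S]>0$ and the global smoothness of $S$ guaranteed by Theorem \ref{support} — and that the reparametrization alters neither the support function nor the strict convexity already established.
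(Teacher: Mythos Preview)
Your proposal is correct and follows essentially the same route as the paper: reconstruct the curve from $S$ via the Minkowski formula $\bar\gamma=-Sz-S_\theta\,z_\theta$, differentiate in time using (\ref{support2}) to obtain the normal speed $(\alpha-1/k)$ plus a tangential term, and then absorb that tangential term by solving a reparametrization ODE on $S^1$. The paper writes the tangential piece abstractly as $-T\bar\gamma(V)$ with $V=k\,\partial_\theta(1/k)\,z_\theta$ and solves $\dot\phi=V(\phi,t)$, which is exactly your ``absorb $\eta=-k^{-3}k_s$'' step; your extra checks that $\bar\gamma$ is a strictly convex immersion with support function $S$ are welcome details the paper leaves implicit.
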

\begin{proof}
We define the evolving curve $\bar{\gamma}:S^1\times [0,\infty)\to
\mathbb{R}$ by
\begin{eqnarray}\label{defsurpport}
\bar{\gamma}(z,t)=-S(z,t)z-\frac{\partial S}{\partial
\theta}(z,t)\frac{\partial z}{\partial \theta}.
\end{eqnarray}
Then we have an evolving curve $\gamma(t)$ which has the support
function $S$ and the curvature $k$ satisfying (\ref{suppcurva}). By
the assumptions of this theorem,
\begin{eqnarray*}
\frac{\partial \bar{\gamma}}{\partial t}(z,t) &=&-\frac{\partial
S}{\partial t}(z,t) z-\frac{\partial^2 S}{\partial t\partial
\theta}\frac{\partial z}{\partial \theta}\\
&=&-(\frac{\partial^2 S}{\partial
\theta^2}+S-\alpha)z-\frac{\partial }{\partial
\theta}(\frac{\partial^2 S}{\partial
\theta^2}+S-\alpha)\frac{\partial z}{\partial \theta}\\
&=&(\alpha-r[S](z))z-\frac{\partial }{\partial
\theta}(r[S](z))\frac{\partial z}{\partial \theta}\\
&=&(\alpha-\frac{1}{k_{\bar{\gamma}}})N_{\bar{\gamma}}(z)-T\bar{\gamma}(V),
\end{eqnarray*}
where $N_{\bar{\gamma}}$ and $k_{\bar{\gamma}}$ are the normal and
curvature corresponding to $\bar{\gamma}$, and $V\in
TS^1\times[0,\infty)$ is the vector field on $S^1$ given by
$k_{\bar{\gamma}}\frac{\partial }{\partial
\theta}(\frac{1}{k_{\bar{\gamma}}})\frac{\partial z}{\partial
\theta}$. Here we used the fact
$T\bar{\gamma}(V)=k_{\bar{\gamma}}^{-1}V$ for any $V\in TS^1$. Next
we define a family of diffeomorphisms $\phi$ such that
$\gamma(p,t)=\bar{\gamma}(\phi(p,t),t)$ gives the solution of
equation (\ref{tf}). Now we take $\phi(p,t)$ to solve the following
ordinary differential equation for each $p$ :
\begin{eqnarray}
\frac{d}{dt}\phi(p,t)=V(\phi(p,t),t).
\end{eqnarray}
This equation has a unique solution for each $p$ as long as $S$
exists and remains smooth. Then we have
\begin{eqnarray*}
\frac{\partial}{\partial t}\gamma(p,t)&=&\frac{\partial}{\partial
t}\bar{\gamma}(\phi(p,t),t)\\
&=&(\frac{\partial}{\partial
t}\bar{\gamma})(\phi(p,t),t)+T\bar{\gamma}(\frac{\partial}{\partial
t}\phi(p,t),t)\\
&=&(\alpha-\frac{1}{k_{\bar{\gamma}}(\phi(p,t),t)})N_{\bar{\gamma}}(\phi(p,t),t)-T\bar{\gamma}(V)+T\bar{\gamma}(V)\\
&=&(\alpha-\frac{1}{k_{\gamma}(p,t)}) N_{\gamma}(p,t),\\
\end{eqnarray*}
where we have used $k_{\gamma}(p,t)=k_{\bar{\gamma}}(\phi(p,t),t)$
and $N_{\gamma}(p,t)=N_{\bar{\gamma}}(\phi(p,t),t)$. Hence the
theorem holds.
\end{proof}

The above result implies the local existence of the curve flow.
Hence we get the following result immediately by the use of theorem
\ref{support} and theorem \ref{support1}.
\begin{thm}
Under the assumptions of theorem \ref{main}, the curve flow
(\ref{tf}) has the global solution, that the flow exists in time
interval
 $[0,\infty)$ with initial curve $\gamma(0)$.
\end{thm}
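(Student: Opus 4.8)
The plan is to assemble the global-existence statement directly from the two preceding theorems, using the support-function equation as a bridge that sidesteps the possible curvature blow-up. First I would invoke Theorem \ref{support}: setting $W:=(S-\tfrac{L(\tau)}{2\pi})e^{-\tau}$ converts the support-function equation (\ref{support2}) into the linear heat equation $W_\tau=W_{\theta\theta}$ on $S^1\times[0,\infty)$, exactly as $w=(\tfrac1k-\tfrac{L}{2\pi})e^{-\tau}$ does in Lemma \ref{longtime}. Because the heat kernel produces a smooth solution for all $\tau\geq 0$ from any smooth $2\pi$-periodic initial datum $S(\cdot,0)$, one obtains a global, $C^\infty$ support function $S(\theta,\tau)$ defined on the entire interval $[0,\infty)$.

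Second, I would verify the hypothesis $r[S]>0$ required to apply Theorem \ref{support1}. By (\ref{suppcurva}) the radius of curvature satisfies $r[S]=S_{\theta\theta}+S=\tfrac1k$, so positivity of $r[S]$ is exactly strict convexity of the evolving curve. This is the content of Theorem \ref{convex}: on every finite interval $[0,T^*)$ the $L^\infty$ bound on $w$ from Lemma \ref{longtime}, together with the upper bound on $L$ from Lemma \ref{length} and the isoperimetric lower bound $L\geq\sqrt{4\pi A}$, keep $\tfrac1k-\tfrac{L}{2\pi}$ bounded and hence force $k\neq 0$. Continuity of $k(\theta,\tau)$ and positivity of $k(\theta,0)$ then upgrade this to $k>0$, i.e.\ $r[S]>0$, and arbitrariness of $T^*$ promotes the conclusion to all of $[0,\infty)$.

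Third, with a globally defined smooth $S$ satisfying $r[S]>0$ in hand, I would apply Theorem \ref{support1} verbatim. That theorem reconstructs a genuine solution $\gamma$ of (\ref{tf}) on $[0,\infty)$ by setting $\bar\gamma(z,t)=-S(z,t)z-\partial_\theta S(z,t)\,\partial_\theta z$ and composing with the flow of the tangential vector field $V$, starting from $\gamma_0=\{-S_0(z)z-\partial_\theta S_0(z)\,\partial_\theta z:z\in S^1\}$; the resulting $\gamma(t)$ carries support function $S(t)$ for every $t$. Since $S$ persists for all time and stays smooth, so does $\gamma$, which is precisely the assertion.

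The genuinely hard point is not in this final assembly but in the passage to the support function itself. The obstacle flagged in the introduction is a finite-time collapsing point where $k=\infty$, i.e.\ $r[S]=0$, which the nonlinear curvature equation (\ref{curveq2}) does not obviously rule out. The support-function reformulation is what defuses it: the substitution linearizes the evolution into the heat equation (\ref{curveq3}), whose explicit global solution yields uniform control of $\tfrac1k$ over every finite time, and this is exactly what converts potential singularity formation into a routine global-existence statement. Consequently the only care needed is that the estimates securing $r[S]>0$ — the bound on $w$ and the two-sided bounds on $L$ — hold uniformly on each compact time interval, which they do by the lemmas already established.
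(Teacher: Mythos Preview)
Your proposal is correct and follows essentially the same route as the paper, which simply states that the result follows immediately from Theorem~\ref{support} and Theorem~\ref{support1}. You have in fact made explicit the one point the paper leaves implicit---namely, that the hypothesis $r[S]>0$ of Theorem~\ref{support1} must be checked via Theorem~\ref{convex} (equivalently, via the bounds from Lemma~\ref{longtime})---so your write-up is a faithful and slightly more careful version of the paper's argument.
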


\section{Convergence}\label{convergence}

In this section we prove the convergence of the evolving curves.

In order to understand the behavior of the global curve flow, we
need the following isoperimetric inequality due to S.L.Pan and
J.N.Yang.
\begin{thm}\cite{PY}\label{isoperi}
For the closed, convex $C^{2}$ curves in the plane, we have
$$
\frac{L^2-2\pi A}{\pi} \leq \int_{0}^{L} \frac{1}{k}ds,
$$
where $L,A$ and $k$ are the length of the curve, the area enclosed
by the evolving curve, and its curvature.
\end{thm}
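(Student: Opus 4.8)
The plan is to reduce the whole statement to the support function $S(\theta)$ of the convex curve and then pass to Fourier series. Since $\gamma$ is strictly convex and $C^2$, the tangent angle $\theta$ is a global parameter, $ds=\frac1k\,d\theta$, and the three geometric quantities in the inequality all admit clean integral expressions in $S$: namely $L=\int_0^{2\pi}S\,d\theta$, $A=\tfrac12\int_0^{2\pi}S\,(S+S_{\theta\theta})\,d\theta$, and, using the relation (\ref{suppcurva}) together with $ds=\frac1k\,d\theta$,
\begin{equation*}
\int_0^L \frac1k\,ds=\int_0^{2\pi}\Bigl(\frac1k\Bigr)^2 d\theta=\int_0^{2\pi}(S+S_{\theta\theta})^2\,d\theta .
\end{equation*}
Thus the assertion becomes an inequality among three quadratic functionals of the single periodic function $S$.

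Next I would expand $S(\theta)=a_0+\sum_{n\ge1}(a_n\cos n\theta+b_n\sin n\theta)$ and evaluate each functional by orthogonality. One obtains $L=2\pi a_0$, together with $A=\pi a_0^2+\tfrac{\pi}{2}\sum_{n\ge1}(1-n^2)(a_n^2+b_n^2)$ and $\int_0^{2\pi}(S+S_{\theta\theta})^2\,d\theta=2\pi a_0^2+\pi\sum_{n\ge1}(1-n^2)^2(a_n^2+b_n^2)$. Substituting these, the $a_0$-contributions match on both sides and the claim collapses to the elementary spectral inequality
\begin{equation*}
\sum_{n\ge1}(n^2-1)(a_n^2+b_n^2)\ \le\ \sum_{n\ge1}(n^2-1)^2(a_n^2+b_n^2).
\end{equation*}

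This last inequality I would verify term by term: the difference of the two sides equals $\sum_{n\ge1}(n^2-1)(n^2-2)(a_n^2+b_n^2)$, and for every integer $n\ge1$ the factor $(n^2-1)(n^2-2)$ is nonnegative (it vanishes at $n=1$ and is strictly positive for $n\ge2$), so each summand is $\ge0$ and the inequality follows. The same computation pins down the equality case: equality forces $a_n=b_n=0$ for all $n\ge2$, hence $S=a_0+a_1\cos\theta+b_1\sin\theta$, which is precisely the support function of a circle, in agreement with the stated rigidity.

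The genuine subtleties here are bookkeeping rather than a single hard obstacle: deriving the support-function formula for $A$ (an integration by parts using periodicity) and justifying the termwise integration of the Fourier expansion, both harmless under the $C^2$ convex hypothesis. The conceptual content lies entirely in recognizing that moving to the spectral side converts the inequality into the pointwise comparison $(n^2-1)^2\ge(n^2-1)$; once that is seen there is no analytic difficulty. This is the Gage-type argument alluded to just before the statement, and it is in essence the proof of Pan and Yang in \cite{PY}.
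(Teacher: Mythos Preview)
Your argument is correct. Writing everything in terms of the support function and passing to Fourier coefficients is exactly the right move: the three functionals $L$, $A$, and $\int_0^L k^{-1}\,ds$ become the quadratic expressions you wrote down, the $a_0$-terms cancel as you say, and the residual inequality $\sum_{n\ge1}(n^2-1)(a_n^2+b_n^2)\le\sum_{n\ge1}(n^2-1)^2(a_n^2+b_n^2)$ is immediate from $(n^2-1)(n^2-2)\ge0$ for integers $n\ge1$. The equality discussion is also right: only the $n=1$ modes survive, and $S=a_0+a_1\cos\theta+b_1\sin\theta$ gives $S+S_{\theta\theta}\equiv a_0$, i.e.\ a circle of radius $a_0$.

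As for the comparison you were asked to make: the paper does \emph{not} supply its own proof of this theorem. It is quoted verbatim from Pan--Yang \cite{PY} and used as a black box in the proof of Theorem~\ref{isodeficit}. The only hint the paper gives about the method is the remark in the introduction that Pan and Yang ``used a trick of Gage in \cite{G1} and \cite{G2}'', and your Fourier/support-function computation is precisely that trick (it is the same mechanism behind Gage's inequality $\pi L\le A\int k^2\,ds$ in \cite{G1}). So there is nothing in the paper to contrast your proof with; what you have written is, up to cosmetic choices, the Pan--Yang argument itself.

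One minor caveat worth recording: your derivation tacitly assumes $k>0$ so that $\theta$ is a global parameter and $S\in C^2$. The theorem is stated for convex $C^2$ curves, which in principle allows $k$ to vanish; but at such points $\int_0^L k^{-1}\,ds=+\infty$ and the inequality is vacuous, while the strictly convex case is the only one used downstream in the paper. So this is not a gap in any meaningful sense.
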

Recall the following Bonnesen inequality (\cite{OS}) that
\begin{equation}\label{osserman}
L^2-4\pi A\geq A^2(\frac{1}{r_{in}}-\frac{1}{r_{ou}})^2,
\end{equation}
where $r_{in}$ and $r_{ou}$ are radii of the incircle (the largest
circle contained in the domain enclosed by $\gamma$) and the
circumcircle ( the smallest circle containing $\gamma$). Since along
the flow, the area is fixed and then the curve $\gamma$ becomes more
round provided the isoperimetric deficit $L^{2}-4\pi A$ is
non-increasing.

In fact, we have the following result, which shows that the curve
flow becomes more and more circular under the evolution process.
\begin{thm}\label{isodeficit}
If a convex curve evolves according to (\ref{gf}), then the
isoperimetric deficit $L^{2}-4\pi A$ is non-increasing during the
evolution process and in case of global flow, it converges to zero
as the time $\tau$ goes to infinity.
\end{thm}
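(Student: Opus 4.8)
The plan is to track the isoperimetric deficit $D(t)=L^2-4\pi A$ directly and to derive a closed differential inequality for $D$ alone. First I would differentiate $D$ in time, using Lemma \ref{length} for $\frac{d}{dt}L$ and Lemma \ref{area} for $\frac{d}{dt}A$, which gives
$$\frac{d}{dt}(L^2-4\pi A)=2L(L-2\pi\alpha)-4\pi\left(\int_0^L\frac{1}{k}ds-\alpha L\right).$$
The crucial algebraic feature is that the two terms carrying $\alpha$ cancel, so that
$$\frac{d}{dt}(L^2-4\pi A)=2L^2-4\pi\int_0^L\frac{1}{k}ds,$$
an expression that no longer involves $\alpha$. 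This is exactly why the statement holds for the whole family (\ref{gf}) and not only for the area-preserving choice (\ref{tf}).

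The second step is to estimate $\int_0^L\frac1k ds$ from below so as to turn this identity into a decay inequality for $D$. Here I would invoke the Pan--Yang isoperimetric inequality (Theorem \ref{isoperi}), which gives $\int_0^L\frac1k ds\ge (L^2-2\pi A)/\pi$. Substituting this bound yields
$$\frac{d}{dt}(L^2-4\pi A)\le 2L^2-4(L^2-2\pi A)=-2(L^2-4\pi A),$$
that is $D'\le -2D$. Since the classical isoperimetric inequality already gives $D=L^2-4\pi A\ge 0$, the inequality $D'\le -2D\le 0$ immediately shows that $D$ is non-increasing, and a Gronwall (comparison) argument gives $D(t)\le D(0)e^{-2t}$, so that $D(t)\to 0$ as $t\to\infty$.

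The whole argument is short once the right ingredients are in place; the genuinely nontrivial step is the use of the sharp lower bound for $\int_0^L\frac1k ds$ supplied by Theorem \ref{isoperi}. The classical isoperimetric inequality alone only controls the sign of $D$, not its evolution, so without the stronger Pan--Yang estimate one could not close the differential inequality nor obtain the exponential decay. Thus the main obstacle is essentially external: it is the availability of that refined inequality, after which the monotonicity and convergence follow by an elementary comparison argument.
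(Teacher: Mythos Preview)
Your proposal is correct and follows essentially the same route as the paper: differentiate $L^2-4\pi A$ using Lemmas \ref{area} and \ref{length}, observe the cancellation of the $\alpha$-terms to obtain $\frac{d}{dt}(L^2-4\pi A)=2L^2-4\pi\int_0^L\frac{1}{k}\,ds$, apply the Pan--Yang inequality (Theorem \ref{isoperi}) to get $D'\le -2D$, and conclude monotonicity and exponential decay. Your added remark that the $\alpha$-independence is what makes the result hold for the whole family (\ref{gf}) is a nice clarification not made explicit in the paper.
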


\begin{proof}
By lemma \ref{area} and lemma \ref{length}, we have
\begin{eqnarray*}
\frac{d}{d\tau}(L^{2}-4\pi A)&=&2LL_t-4\pi A_\tau\\
&=&2L(L-2\pi\alpha)-4\pi(\int^L_0\frac{1}{k}ds-\alpha L)\\
&=&2L^2-4\pi\int^L_0\frac{1}{k}ds.
\end{eqnarray*}
By the theorem \ref{isoperi}, we have
$$
\frac{d}{dt}(L^{2}-4\pi A)\leq 2L^2-4\pi \frac{L^2-2\pi A}{\pi} \leq
-2(L^2-4\pi A),
$$
We always have $L^2-4\pi A\geq 0$, and so
$$
\frac{d}{d\tau}(L^{2}-4\pi A)\leq 0.
$$
Moreover, we have
$$
0\leq L^{2}-4\pi A\leq Cexp(-2\tau),
$$
where $C=L^{2}(0)-4\pi A(0)$. As $t\rightarrow \infty$ in case of
global flow, we have the decay of the isoperimetric defect,
$$
L^{2}-4\pi A\rightarrow 0.
$$
\end{proof}

Recall that the area is fixed along our curve flow. By
(\ref{osserman}), we know that the isoperimetric defect for closed
convex curve is the measure of circularness of the curve (see
\cite{OS} and \cite{SC} for more related inequalities). Then we know
that for $\tau>0$, $\gamma(\tau)$ is more circular than $\gamma(0)$
but with fixed area of their enclosed regions.

We now give a remark for higher derivative bounds of $k$. We may
also get higher order derivatives estimates for $1/k$. Let
$(\frac{1}{k})_\theta=\partial_\theta(\frac{1}{k})$. Then we have
$$
\frac{\partial }{\partial \tau}(\frac{1}{k})_\theta=\frac{\partial
^2 }{\partial \theta^2}(\frac{1}{k})_\theta +(\frac{1}{k})_\theta,
$$
or
$$
\frac{\partial }{\partial
\tau}[e^{-\tau}(\frac{1}{k})_\theta]=\frac{\partial ^2 }{\partial
\theta^2}[e^{-\tau}(\frac{1}{k})_\theta].
$$
 Using the maximum principle, we know that $(\frac{1}{k})_\theta$
is bounded at any existing time. In fact we have
$$
(\frac{1}{k})_\theta
e^{-\tau}=\int^{\infty}_{-\infty}\frac{1}{2\sqrt{\pi
\tau}}e^{-\frac{(\theta-\xi)^2}{4\tau}}(\frac{1}{k})_\theta(0)d\xi.
$$

 We
denote $r_{in}$ radii of the largest inscribed circle of the curve
$\gamma$. Now we can use the method of M.Gage and R.S.Hamilton to
show curvature $k$ converging to a constant as time goes into
infinity, see Section 5 in \cite{GH}. First, we need a result in
\cite{GH}.

\begin{thm}\cite{GH}\label{GH}
$k(\theta,t)r_{in}(t)$ converges uniformly to 1, when the
isoperimetric deficit $L^{2}-4\pi A \to 0$.
\end{thm}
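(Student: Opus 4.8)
The plan is to rewrite the quantity as $k(\theta,t)\,r_{in}(t)=r_{in}(t)/r(\theta,t)$, where $r:=1/k=S_{\theta\theta}+S$ is the radius of curvature, and then to establish two uniform facts as the deficit $L^2-4\pi A\to 0$: that the inradius satisfies $r_{in}(t)\to L(t)/(2\pi)$, and that $r(\theta,t)\to L(t)/(2\pi)$ uniformly in $\theta$. Since $ds=r\,d\theta$ gives $\int_0^{2\pi}r\,d\theta=L$, the number $L/(2\pi)$ is exactly the mean of $r$ and is the natural common limit; once both facts hold, $k r_{in}=r_{in}/r\to 1$ uniformly.

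For the inradius I would use Bonnesen's inequality $L^2-4\pi A\ge \pi^2(r_{out}-r_{in})^2$, with $r_{out}$ the circumradius. Combined with the monotonicity of perimeter under inclusion of convex bodies, which yields $2\pi r_{in}\le L\le 2\pi r_{out}$, i.e. $r_{in}\le L/(2\pi)\le r_{out}$, this forces $r_{out}-r_{in}\to 0$ and hence $r_{in},r_{out}\to L/(2\pi)$. Geometrically the curve is thereby trapped in an annulus of vanishing width between concentric circles, which is also convenient for locating the limiting circle.

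The essential point is the uniform convergence of $r$ to its mean. Writing $S=a_0+\sum_{n\ge 1}(a_n\cos n\theta+b_n\sin n\theta)$ and using $r=S_{\theta\theta}+S$ and $L=2\pi a_0$, the Hurwitz-type identity $L^2-4\pi A=2\pi^2\sum_{n\ge 2}(n^2-1)(a_n^2+b_n^2)$ holds, while $r-\tfrac{L}{2\pi}=\sum_{n\ge 2}(1-n^2)(a_n\cos n\theta+b_n\sin n\theta)$. Thus the deficit controls only $\sum_{n\ge 2}(n^2-1)(a_n^2+b_n^2)$, a norm weaker even than $\|r-L/(2\pi)\|_{L^2}$, which carries the stronger weight $(n^2-1)^2$; indeed, for a general convex curve a small deficit does not by itself force $r$ uniformly close to its mean, since a low-amplitude high-frequency ripple keeps the deficit small while $\|r-L/(2\pi)\|_\infty$ need not be small. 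The theorem must therefore be read together with the uniform-in-$t$ smoothness bounds available along the flow: a bound on a higher Sobolev norm of $S$, uniform in $t$, from parabolic estimates for (\ref{support2}) (equivalently (\ref{curveq2})), lets one interpolate (Agmon or Gagliardo--Nirenberg on $S^1$) and upgrade the vanishing of the weak norm to $\|r-L/(2\pi)\|_\infty\to 0$. I expect securing and exploiting those uniform higher-order a priori bounds to be the main obstacle, precisely because the isoperimetric deficit is blind to the high-frequency part of the curvature.

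For the present flow one can in fact dispense with the interpolation altogether: the explicit heat representation of Lemma \ref{longtime} and Theorem \ref{support} shows that in the Fourier expansion of $1/k-L/(2\pi)$ the $n=0$ mode vanishes (because $\int_0^{2\pi}(1/k)\,d\theta=L$) and the $n=\pm1$ modes vanish (because $r=S_{\theta\theta}+S$ has no first harmonic), so that only modes $n\ge 2$ survive, each decaying like $e^{(1-n^2)\tau}$, which yields uniform, indeed $C^m$, exponential decay of $r-L/(2\pi)$. Together with the Bonnesen step this gives $k r_{in}\to 1$ uniformly.
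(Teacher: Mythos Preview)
The paper does not prove this theorem; it is quoted from Gage--Hamilton \cite{GH} without proof and then used as a black box in Theorem~\ref{constant}, where it is combined with the Bonnesen inequality~(\ref{bonn}) to conclude $k\to 2\pi/L$. So there is no proof in the paper to compare your proposal against.

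That said, your argument is correct and, for the specific flow of this paper, more direct than the route the paper takes. Your Bonnesen step, giving $r_{in}\to L/(2\pi)$, is exactly what the paper does in Theorem~\ref{constant}. Your second step---observing via Lemma~\ref{longtime} that $v=1/k-L/(2\pi)$ solves $v_\tau=v_{\theta\theta}+v$, has vanishing zeroth Fourier mode (since $\int_0^{2\pi}(1/k)\,d\theta=L$) and vanishing first modes (since $1/k=S_{\theta\theta}+S$ annihilates the first harmonic), so that every surviving mode decays like $e^{(1-n^2)\tau}\le e^{-3\tau}$---yields $r\to L/(2\pi)$ uniformly (indeed in every $C^m$) without invoking \cite{GH} at all. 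This replaces the paper's appeal to Gage--Hamilton by a self-contained Fourier argument tailored to this flow, and in fact subsumes Theorem~\ref{constant} as well. What it does not do is prove the theorem as a statement about general one-parameter families of convex curves with vanishing deficit; you are right that in that generality the conclusion fails without uniform higher-order control, and Gage--Hamilton's original proof supplies such control from the curve-shortening evolution itself.
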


\begin{thm}\label{constant}
Under the assumptions of theorem \ref{main}, we have $k\to
\frac{2\pi}{L}$ as $t\to \infty$.
\end{thm}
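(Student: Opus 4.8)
The plan is to combine the two convergence results that are now available: Theorem~\ref{GH}, which tells us that $k(\theta,t)\,r_{in}(t)\to 1$ uniformly, and Theorem~\ref{isodeficit}, which tells us that the isoperimetric deficit $L^2-4\pi A$ decays to zero as $t\to\infty$. The target is the sharper statement that the curvature converges to the specific constant $\frac{2\pi}{L}$. Since $L=L(t)$ varies in time, the first thing I would clarify is that the assertion $k\to\frac{2\pi}{L}$ must be understood in the sense that the difference $k(\theta,t)-\frac{2\pi}{L(t)}$ tends to zero uniformly in $\theta$ as $t\to\infty$; this is the natural reading, because a round circle of length $L$ has curvature exactly $\frac{2\pi}{L}$, so $\frac{2\pi}{L}$ is the curvature of the limiting circle.

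First I would extract a relation between $r_{in}(t)$ and $L(t)$ from the isoperimetric convergence. As the deficit $L^2-4\pi A\to 0$, the convex curve becomes asymptotically round, so the inscribed radius, the circumradius, and the radius $\frac{L}{2\pi}$ of the circle of equal length all become comparable and in fact asymptotically equal. Concretely, for a convex curve that is nearly a circle one has $r_{in}(t)\to \frac{L(t)}{2\pi}$ in the sense that $\frac{2\pi r_{in}(t)}{L(t)}\to 1$; this can be justified by noting that a circle of radius $r_{in}$ is inscribed, so $2\pi r_{in}\le L$, while the reverse inequality up to a vanishing error follows from Theorem~\ref{GH} together with the total-curvature identity $\int_0^{2\pi}\frac{1}{k}\,d\theta = L$ (since $ds = \frac{1}{k}\,d\theta$). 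Indeed, integrating $k\,r_{in}\to 1$ against $d\theta$ over $[0,2\pi]$ gives
\begin{eqnarray*}
r_{in}(t)\int_0^{2\pi} k\,d\theta \;=\; \int_0^{2\pi} k\,r_{in}\,d\theta \;\longrightarrow\; 2\pi,
\end{eqnarray*}
and the normalization forces $r_{in}(t)\to\frac{L(t)}{2\pi}$ asymptotically once $L(t)$ is controlled.

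With this comparison in hand, the conclusion follows by writing
\begin{eqnarray*}
k(\theta,t)-\frac{2\pi}{L(t)} \;=\; \frac{1}{r_{in}(t)}\Big(k(\theta,t)\,r_{in}(t)-1\Big) \;+\; \Big(\frac{1}{r_{in}(t)}-\frac{2\pi}{L(t)}\Big).
\end{eqnarray*}
The first term is controlled because $k\,r_{in}\to 1$ uniformly by Theorem~\ref{GH}, provided $r_{in}(t)$ stays bounded away from zero; the second term tends to zero because $\frac{2\pi r_{in}(t)}{L(t)}\to 1$ from the previous step. To make the first term work I would need the lower bound on $r_{in}$, which comes from the uniform bounds already established: $L$ is bounded above (Lemma~\ref{length}) and below by $\sqrt{4\pi A}$ (isoperimetric inequality), $A$ is constant (Lemma~\ref{area}), and the curvature is uniformly bounded above via the bound $|\frac{1}{k}-\frac{L}{2\pi}|\le Me^{T^*}$ refined on the infinite interval, so $\frac{1}{k}$ and hence $r_{in}$ stay bounded below by a positive constant.

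The main obstacle I anticipate is making the relation $r_{in}(t)\to\frac{L(t)}{2\pi}$ genuinely uniform and quantitative rather than merely a heuristic consequence of roundness; in particular one must guarantee that $r_{in}(t)$ does not degenerate (stay bounded below) and that the comparison between $r_{in}$ and $\frac{L}{2\pi}$ holds uniformly in $\theta$ as $t\to\infty$. This requires carefully invoking the a priori bounds on $L$, $A$, and $k$ together with Theorem~\ref{GH}, and checking that the decay of the deficit translates into control on the geometry uniformly in time rather than pointwise. Once the lower bound on $r_{in}$ and the asymptotic identity $\frac{2\pi r_{in}}{L}\to1$ are secured, the final estimate is immediate from the displayed decomposition.
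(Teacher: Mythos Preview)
Your overall strategy---deduce $r_{in}(t)\to \frac{L(t)}{2\pi}$ and then combine this with Theorem~\ref{GH} via the decomposition
\[
k-\frac{2\pi}{L}=\frac{1}{r_{in}}\bigl(kr_{in}-1\bigr)+\Bigl(\frac{1}{r_{in}}-\frac{2\pi}{L}\Bigr)
\]
---is exactly the paper's. The difference lies only in how $r_{in}\to\frac{L}{2\pi}$ is obtained. The paper invokes the Bonnesen inequality
\[
\frac{L^{2}}{A}-4\pi \ \ge\ \frac{(L-2\pi r_{in})^{2}}{A}:
\]
since $A$ is constant (Lemma~\ref{area}) and $L^{2}-4\pi A\to 0$ (Theorem~\ref{isodeficit}), this forces $L-2\pi r_{in}\to 0$ in one line.

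Your alternative---extract $r_{in}\to\frac{L}{2\pi}$ by integrating the uniform convergence $kr_{in}\to 1$ from Theorem~\ref{GH}---is a legitimate idea, but two steps are miswritten. First, the displayed integral $r_{in}\int_0^{2\pi}k\,d\theta\to 2\pi$ is true but useless, because $\int_0^{2\pi}k\,d\theta=\int_0^L k^{2}\,ds$ is not $L$; you already noted the correct identity $\int_0^{2\pi}\frac{1}{k}\,d\theta=L$, so what you should integrate is $\frac{1}{kr_{in}}\to 1$, giving $\frac{L}{r_{in}}=\frac{1}{r_{in}}\int_0^{2\pi}\frac{1}{k}\,d\theta\to 2\pi$. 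Second, your justification for the lower bound on $r_{in}$ via ``the bound $|\frac{1}{k}-\frac{L}{2\pi}|\le Me^{T^{*}}$ refined on the infinite interval'' fails: that estimate from Theorem~\ref{convex} blows up with $T^{*}$ and gives no uniform-in-time control. Once the integration is corrected, however, the lower bound comes for free from $\frac{2\pi r_{in}}{L}\to 1$ together with $L\ge\sqrt{4\pi A}>0$. With these two fixes your route does work; the paper's Bonnesen argument is simply shorter and avoids invoking Theorem~\ref{GH} twice.
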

\begin{proof}
By the Bonnesen inequality (see \cite{OS}),
\begin{equation}\label{bonn}
\frac{L^2}{A}-4\pi \geq \frac{(L-2\pi r_{in})^2}{A}
\end{equation}
 and theorem \ref{isodeficit}, theorem \ref{area},
 we have $r_{in}\to \frac{L}{2\pi}$ as $t\to\infty$. Hence the
 theorem follows immediately from theorem \ref{GH}.
\end{proof}

Then we obtain the $C^{\infty}$ convergent part in theorem
\ref{main}.

\begin{thm}\label{cconvegence}
Under the assumptions of theorem \ref{main}, the curve flow
(\ref{tf}) converges to a circle in $C^{\infty}$ sense as time goes
into infinity.
\end{thm}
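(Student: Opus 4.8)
The plan is to exploit the fact that, although the normal speed $\alpha(t)$ depends nonlinearly on the whole curve, it is \emph{spatially constant}, so it feeds only into the zeroth Fourier mode of the support function. I would work throughout with the linear evolution equation \eqref{support2},
\begin{equation*}
\frac{\partial S}{\partial \tau}=\frac{\partial^2 S}{\partial\theta^2}+S-\alpha,
\end{equation*}
and expand $S(\theta,\tau)=\sum_{n\in\mathbb{Z}}\hat S_n(\tau)e^{in\theta}$ as a Fourier series in the angle $\theta\in S^1$. Since $\alpha=\alpha(\tau)$ contributes only at $n=0$, every coefficient with $n\neq 0$ obeys the autonomous linear ODE $\dot{\hat S}_n=(1-n^2)\hat S_n$, so $\hat S_n(\tau)=e^{(1-n^2)\tau}\hat S_n(0)$. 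Thus the two modes $n=\pm 1$ (which encode the position of the curve's centre) are frozen, while every mode with $|n|\geq 2$ decays like $e^{(1-n^2)\tau}$, hence at least as fast as $e^{-3\tau}$.

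Next I would convert this mode-by-mode decay into $C^\infty$ convergence. Because the initial curve is smooth, its support function $S(\cdot,0)$ is smooth and the coefficients $\hat S_n(0)$ decay faster than any power of $n$. Using \eqref{suppcurva} in the form $\tfrac1k=S_{\theta\theta}+S$, the $n$-th Fourier mode of $1/k$ equals $(1-n^2)\hat S_n$, so for each $m$ the $C^m$-norm of the non-constant part of $1/k$ is bounded by $\sum_{|n|\geq 2}|1-n^2|\,n^{m}\,e^{(1-n^2)\tau}|\hat S_n(0)|$. For $\tau\geq 1$ this series is dominated by its $n=2$ term and is therefore at most a constant (depending on $m$) times $e^{-3\tau}$. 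Since for large $\tau$ the quantity $1/k$ is uniformly close to the positive constant $\bar S=\tfrac1{2\pi}\int_0^{2\pi}S\,d\theta=\tfrac{L}{2\pi}$, it is bounded away from $0$ and $\infty$, so this decay transfers to $k$ and all its $\theta$-derivatives: $k$ converges exponentially in $\tau$, in every $C^m(S^1)$-norm, to a spatial constant.

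Finally I would identify the limit. By Lemma \ref{length} the length $L$ is non-increasing and bounded below, and by Theorem \ref{isodeficit} the deficit $L^2-4\pi A\to 0$ with $A$ constant, so $L\to L_\infty=\sqrt{4\pi A}$ and the zeroth mode $\bar S=L/2\pi\to\sqrt{A/\pi}$, in agreement with Theorem \ref{constant}. Because the frozen modes $n=\pm1$ fix a centre $c_0$ once and for all, $S(\theta,\tau)$ converges in $C^\infty(S^1)$ to $\sqrt{A/\pi}+\langle c_0,(\cos\theta,\sin\theta)\rangle$, the support function of the round circle of radius $\sqrt{A/\pi}$ centred at $c_0$. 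Reconstructing the curve through \eqref{defsurpport}, namely $\bar\gamma=-Sz-S_\theta z_\theta$, then yields $C^\infty$ convergence of $\gamma(t)$ to that circle.

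The main obstacle is making the spectral argument fully rigorous. One must invoke the heat-kernel representation of Theorem \ref{support} to guarantee that $S(\cdot,\tau)$ stays smooth and uniformly Fourier-summable for all time, so that termwise differentiation and the interchange of $\sum_n$ with $\partial_\theta^m$ are legitimate; and one must confirm that the nonlinear coupling through $\alpha$ genuinely leaves the modes $|n|\geq 2$ \emph{exactly} untouched, rather than only to leading order, which is precisely where the spatial constancy of $\alpha$ is essential. Once these points are secured, the exponential decay of each higher mode delivers the smooth convergence essentially automatically.
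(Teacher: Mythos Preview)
Your proposal is correct and takes a genuinely different, more explicit route than the paper. The paper's own proof is only two lines: it cites Theorem~\ref{constant} (which in turn rests on the external Gage--Hamilton fact, Theorem~\ref{GH}, that $k\,r_{in}\to1$ once $L^2-4\pi A\to0$, together with Bonnesen's inequality~\eqref{bonn}) to obtain uniform convergence of $k$ to $2\pi/L$, and then appeals to the smoothness furnished by Lemma~\ref{longtime} to claim $C^\infty$ convergence. Your argument instead exploits the linearity of the support-function equation~\eqref{support2} directly via a Fourier decomposition: because $\alpha(\tau)$ is spatially constant it touches only the zeroth mode, the $n=\pm1$ modes are exactly conserved (and pin down the centre of the limit circle), and every mode with $|n|\ge2$ decays at the explicit rate $e^{(1-n^2)\tau}\le e^{-3\tau}$. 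This buys you quantitative exponential convergence in every $C^m$ norm, an identification of the limiting centre, and complete self-containment---Theorem~\ref{GH} and Bonnesen are never needed. The paper's route is shorter on the page but leans on heavier borrowed machinery and leaves the passage from uniform convergence of $k$ to $C^\infty$ convergence of $\gamma$ essentially implicit; your spectral argument makes that step, and the location of the limit, transparent.
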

\begin{proof}
By lemma \ref{longtime}, the curvature $k(t)$ is $C^{\infty}$
differentiable. Then theorem \ref{main} follows immediately from
theorem \ref{constant}.
\end{proof}

\end{document}